\documentclass[a4paper,11pt, reqno]{amsart}
\usepackage{amsmath}
\usepackage[utf8]{inputenc}
\usepackage[english]{babel}
\usepackage[babel]{csquotes}
\usepackage[backend=bibtex, firstinits=true, maxbibnames=99, sorting=nyt]{biblatex}
\renewbibmacro{in:}{}
\usepackage[english]{varioref}  
\hyphenation{Stri-chartz}
\hyphenation{Sprin-ger}

\usepackage{hyperref}

\usepackage{amssymb, amsthm, amsfonts, braket, bm} 
\usepackage{mathrsfs, mathtools}
\usepackage{mathabx}

\usepackage{subfig, xcolor, booktabs, tabularx, graphicx, emptypage, tikz}
\usetikzlibrary{decorations.markings, arrows.meta}   

\usepackage{caption, microtype, setspace}                     
\mathtoolsset{showonlyrefs}

\usepackage{articolo}

\bibliography{toto}
\AtEveryBibitem{
 \clearlist{address}
 \clearfield{date}
\clearfield{eprintclass}
 \clearfield{isbn}
 \clearfield{issn}
 \clearlist{location}
 \clearfield{month}
 \clearfield{series}
 \clearfield{url}
 \clearfield{doi}

 \ifentrytype{book}{}{
  \clearlist{publisher}
  \clearname{editor}
 
 }
 
}


\newcommand{\Xvec}{\vec{X}}
\newcommand{\Mrom}{\boldsymbol{\mathrm{M}}}
\DeclareMathOperator{\dist}{d}

\providecommand{\PhaseMatrix}[1]{\begin{bmatrix} \cos({#1}) & \sin({#1})\,\mhDelta^{-\frac12} \\ -\sin({#1}) \mhDelta^\frac12 & \cos({#1}) \end{bmatrix}}

\newcommand{\StrichartzConstant}{\mathcal{A}_3}
\newcommand{\SpaceDim}{{3}}
\newcommand{\sReg}{{\frac12}}
\newcommand{\PhaseSpace}{{\Hcaldot^{1/2}}}

\newcommand{\fboldstar}{{{\Bd f}_\star}}

\newcommand{\fboldbot}{{{\Bd f}_\bot}}

\newcommand{\Lfour}{{L^4(\R^{1+\SpaceDim})}}

\DeclareMathOperator{\Span}{span}

\newcommand{\RPolar}{R}
\newcommand{\OmegaZero}{\Omega_0}

\newcommand{\Degeneracy}{N}

\newcommand{\obold}{{\Bd{0}}}
\newcommand{\gammabold}{{\Bd{\gamma}}}

\newcommand{\Svec}{{\vec{S}}}

\newcommand{\Pzero}{\Pcal_0}

\newcommand{\sqrtDelta}{\sqrt{-\Delta}}

\newcommand{\mhDelta}{\Tonde{-\Delta}}

\DeclareMathOperator{\SO}{SO}

\title[A sharpened Strichartz inequality for the wave equation]{A sharpened Strichartz inequality for\\ the wave equation}
\author{Giuseppe Negro}
\thanks{Supported by the ERC grant 277778 and the MINECO grants SEV-2011-0087, SEV-2015-0554, MTM2013-41780-P and MTM2017-85934-C3-1-P (Spain), and by the LAGA, Universit\'e Paris 13 (France).}
\email{g.negro@bham.ac.uk}
\date{\today}
\begin{document}

\begin{abstract} We disprove a conjecture of Foschi, regarding extremizers for the Strichartz inequality with data in the Sobolev space $\Hdot^{1/2}\times\Hdot^{-1/2}(\R^d)$, for even $d\ge 2$. On the other hand, we provide evidence to support the conjecture in odd dimensions and refine his sharp inequality in $\R^{1+3}$, adding a term proportional to the distance of the initial data from the set of extremizers. The proofs use the conformal compactification of the Minkowski space-time given by the Penrose transform.

\vspace{1em}

\noindent{\bf Une in\'egalit\'e de Strichartz pr\'ecis\'ee pour l'\'equation des ondes.} \textsc{R\'esum\'e.} Nous infirmons une conjecture de Foschi concernant les points extr\'emaux de l'in\'egalit\'e de Strichartz \`a donn\'ees dans l'espace de Sobolev $\Hdot^{1/2}\times\Hdot^{-1/2}(\R^d)$, o\`u $d\ge 2$ est pair. En revanche, nous donnons des indications en faveur de sa conjecture en dimension impaire, ainsi qu'une version raffin\'ee de son in\'egalit\'e optimale sur $\mathbb R^{1+3}$, en ajoutant un terme proportionnel \`a la distance des donn\'ees initiales de l'ensemble des points extr\'emaux. Les d\'emonstrations utilisent la compactification conforme de l'espace-temps de Minkowski donn\'ee par la transformation de Penrose.
\end{abstract}

\maketitle

\section{Introduction}
We consider solutions $u$ to the wave equation $u_{tt}=\Delta u$, on $\R^{1+d}$ with $d\ge 2$, and initial data $\ubold(0)=(u(0), u_t(0))$. We take such initial data in the Sobolev space
of pairs $\fbold=(f_0,f_1)$ with norm defined by
\begin{equation}\label{eq:Sobolev_norm}
	\norm{\fbold}_{\Hdot^{1/2}\times\Hdot^{-1/2}(\R^d)}= \Big( \lVert\mhDelta^{1/4} f_0 \rVert_{L^2(\R^d)}^2 + \lVert\mhDelta^{-1/4} f_1\rVert_{L^2(\R^d)}\Big)^{1/2}.
\end{equation}
In 1977, Strichartz~\cite{Strichartz77} proved that there is a positive constant $C$ such that
\begin{equation}\label{eq:StrichartzIneq}
     \norm{u}_{L^{2\frac{d+1}{d-1}}(\R^{1+d})} \le C\norm{\ubold(0)}_{\Hdot^{1/2} \times \Hdot^{-1/2} (\R^d)}.
\end{equation}
Foschi \cite{Foschi07} proved that, for $d=3$, the optimal constant is attained when
\begin{equation}\label{eq:seeding_maximizer}
	 \ubold(0)= (u(0), u_t(0)) = \Tonde{ (1+\abs{\cdot}^2)^{-\frac{d-1}{2}} , 0}, 
\end{equation}
meaning that $\mathcal{A}_d=\norm{u}_{L^{2\frac{d+1}{d-1}}(\R^{1+d})}/\norm{\ubold(0)}_{\Hdot^{1/2} \times \Hdot^{-1/2}}$, where $\ubold(0)$ is given by~\eqref{eq:seeding_maximizer}, is the smallest possible value for the multiplicative constant $C$ in~\eqref{eq:StrichartzIneq} for $d=3$; precisely, $\StrichartzConstant=\Tonde{\frac 3 {16\pi} }^{1/4}$. Foschi conjectured that \eqref{eq:seeding_maximizer} should extremize in any dimension $d\ge 2$. We will provide evidence to support his conjecture in odd dimensions, however we will disprove it in even dimensions; see the forthcoming Theorem~\ref{put}.

Foschi also characterized the initial data that extremize the Strichartz inequality~\eqref{eq:StrichartzIneq} with $d=3$. The full set $\Mrom$ is obtained by acting a group of symmetries of the inequality on the data \eqref{eq:seeding_maximizer}.
Writing
\begin{equation}\label{eq:distance}
  \dist( \fbold, \Mrom) = \inf_{\phibold\in \Mrom}\norm{ \fbold - \phibold}_{\Hdot^{1/2}\times \Hdot^{-1/2}},
\end{equation}
we will mainly be concerned with the following refinement of Foschi's inequality.
\begin{thm}\label{thm:main} There is a positive constant $C$ such that, for all $u\colon\R^{1+3}\to \R$ satisfying $u_{tt}=\Delta u$,
\begin{equation*}
  	C\dist(\ubold(0), \Mrom)^2 \le \StrichartzConstant^2\norm{\ubold(0)}_{\Hdot^{1/2}\times \Hdot^{-1/2}}^2-\norm{u}_{L^4(\R^{1+3})}^2 \le\StrichartzConstant^2\dist(\ubold(0), \Mrom)^2.
\end{equation*} 
\end{thm}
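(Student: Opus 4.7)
\emph{Plan.} The upper bound follows from the Euler--Lagrange equation for extremizers together with the cone structure of $\Mrom$. The lower bound---a quantitative stability statement---requires a local linearization around $\Mrom$ combined with a concentration-compactness argument.

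\emph{Upper bound.} Set $\fbold = \ubold(0)$, let $\phibold \in \Mrom$ be a nearest extremizer, and write $\psibold := \fbold - \phibold$, $u = U\fbold = U\phibold + U\psibold$, where $U$ denotes the wave propagator. Since $\Mrom$ is invariant under $\fbold \mapsto c\fbold$, the minimality condition along the scaling direction gives $\langle\fbold,\phibold\rangle = \norm{\phibold}^2$, equivalently $\langle\phibold,\psibold\rangle = 0$ and $\norm{\fbold}^2 = \norm{\phibold}^2 + \norm{\psibold}^2$. The vanishing of the first variation of the Strichartz ratio at $\phibold$, applied in the direction $\psibold$, yields
\begin{equation*}
  \int_{\R^{1+3}} (U\phibold)^3 (U\psibold) = \StrichartzConstant^4 \norm{\phibold}^2 \langle \phibold,\psibold\rangle = 0.
\end{equation*}
Expanding $(U\phibold + U\psibold)^4$ pointwise and using the algebraic identity $6a^2 + 4ab + b^2 = (2a+b)^2 + 2a^2$, the cubic cross term integrates to zero, so
\begin{equation*}
  \norm{u}_{L^4}^4 - \norm{U\phibold}_{L^4}^4 = \int_{\R^{1+3}} (U\psibold)^2 \bigl[(2U\phibold + U\psibold)^2 + 2(U\phibold)^2\bigr] \ge 0.
\end{equation*}
Hence $\norm{u}_{L^4}^2 \ge \StrichartzConstant^2 \norm{\phibold}^2$, and the upper bound follows from $\norm{\fbold}^2 - \norm{\phibold}^2 = \dist(\fbold,\Mrom)^2$.

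\emph{Lower bound, local step.} Let $\Deficit(\fbold) := \StrichartzConstant^2 \norm{\fbold}^2 - \norm{u}_{L^4}^2$. For $\fbold$ near an extremizer $\phibold$, both $\Deficit(\phibold) = 0$ and (again by the Euler--Lagrange equation) $D\Deficit|_\phibold = 0$, so Taylor's formula gives
\begin{equation*}
  \Deficit(\phibold + \psibold) = \StrichartzConstant^2 \norm{\psibold}^2 - \frac{3}{\StrichartzConstant^2\norm{\phibold}^2} \int_{\R^{1+3}}(U\phibold)^2(U\psibold)^2 + \frac{2\StrichartzConstant^2\langle\phibold,\psibold\rangle^2}{\norm{\phibold}^2} + o(\norm{\psibold}^2).
\end{equation*}
The task is to bound this quadratic form below by $c\norm{\psibold}^2$ on the orthogonal complement of the tangent space $T_\phibold \Mrom$. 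The bilinear object $\int (U\phibold)^2(U\psibold)^2 = \norm{(U\phibold)(U\psibold)}_{L^2}^2$ is analyzed via Foschi's conformal transfer to $\SSS^3$, where it becomes a compact self-adjoint quadratic form diagonalizable by spherical harmonics. The kernel is identified with $T_\phibold \Mrom$---spanned by the infinitesimal generators of the symmetry group (scalings, space-time translations, Lorentz boosts, conformal inversions)---and a spectral gap to the next eigenvalue furnishes the required coercivity, hence the lower bound for $\fbold$ close to $\Mrom$.

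\emph{Lower bound, global step, and main obstacle.} A normalized sequence $\fbold_n$ with $\norm{\fbold_n} = 1$ hypothetically violating the estimate must satisfy $\Deficit(\fbold_n) \to 0$, since $\dist(\fbold_n,\Mrom)$ is uniformly bounded; thus $\fbold_n$ is an extremizing sequence. A Bahouri--G\'erard-type profile decomposition adapted to the wave equation extracts a subsequence converging modulo symmetries to an extremizer, placing $\fbold_n$ in the local regime and contradicting the local step. The principal difficulty of the whole argument is the spectral analysis of the Hessian: identifying its kernel precisely with $T_\phibold\Mrom$ and producing a quantitative spectral gap requires an explicit diagonalization on $\SSS^3$ with careful bookkeeping of all symmetry-generated zero modes, whereas the compactness component and the upper bound are comparatively straightforward.
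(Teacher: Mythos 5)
Your outline reproduces the paper's architecture almost exactly: an upper bound from first-order criticality at a nearest point of the cone $\Mrom$, a local lower bound from the second-order Taylor expansion of the deficit around an extremizer together with a spectral gap for the Hessian on the orthogonal complement of $T_{\fboldstar}\Mrom$, and globalization by profile decomposition plus a contradiction argument. Your upper bound differs only cosmetically from Proposition~\ref{prop:up_bound}: the paper runs an abstract convexity argument ($H_n(1)\ge H_n'(0)$), valid for every $1<p<\infty$ and for a minimizing sequence, so it never needs the nearest point to be attained; you exploit $p=4$ and the identity $6a^2+4ab+b^2=(2a+b)^2+2a^2$, which is also sound because the cone structure of $\Mrom$ lets you normalize a minimizing sequence to be exactly orthogonal to the increments. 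The quadratic form you write down in the local step is precisely the paper's $Q$ divided by $2\Scal^2\norm{\fboldstar}_{\PhaseSpace}^2$.

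The decisive quantitative step, however, is asserted rather than proved: you state that the Hessian has kernel exactly $T_{\fboldstar}\Mrom$ and a spectral gap, and you yourself flag this as the principal difficulty. This is the content of Lemma~\ref{lem:main}, and without some version of that computation the theorem is not established. The paper carries it out by noting that under the Penrose transform $S_t\fboldstar$ becomes $\cos T$ on $\SSS^1\times\SSS^3$, using Lemma~\ref{lem:sym_trick} to convert $\iint(S_t\fboldstar)^2(S_t\fbold)^2$ into an integral over the full cylinder, and diagonalizing $Q$ in spherical harmonics: the eigenvalues are $\tfrac{3\pi}{4}(\ell-1)$ and $\tfrac{3\pi}{4}(\ell-1)(\ell+1)^{-2}$ on the two components, the kernel is exactly $\{\ell\le 1\}=T_{\fboldstar}\Mrom$, and the gap relative to the $\Hdot^{1/2}\times\Hdot^{-1/2}$ norm is $\pi/4$, attained at $\ell=2$. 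A second, smaller omission: your local step presupposes that the nearest point of $\Mrom$ is attained and that the increment is orthogonal to the full tangent space $T_{\fboldstar}\Mrom$ (not merely to $\fboldstar$); since the symmetry group, which includes Lorentz boosts, is non-compact, this requires an argument, supplied in Step 1 of Proposition~\ref{prop:main} via the decay of $\Braket{\fbold|\Gamma_{\alphabold}\fboldstar}$ as $\abs{\alphabold}\to\infty$. Note also that the profile decomposition must incorporate the Lorentz symmetry (Ramos's extension of Bahouri--G\'erard), precisely because the distance is taken modulo boosts. The rest of your plan matches the paper.
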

The upper bound is proved in a more general setting in the following section. The lower bound, on the other hand, requires a much more careful treatment and it will follow from a local version, in which we also obtain the optimal constant.

Brezis and Lieb asked if the sharp Sobolev inequality due to Aubin \cite{Aub76} and Talenti \cite{Tal76} could be sharpened in this way; see \cite[question (c)]{BreLie85}. This was solved by Bianchi and Egnell \cite{BiaEgn91}. 
Most relevantly, a sharpening of the Strichartz inequality for the Schr\"odinger equation with $d=1$ or $2$ is implicit in the work of Duyckaerts, Merle and Roudenko \cite{DuMeRo11}, who applied their result to the mass-critical nonlinear Schr\"odinger equation in the small data regime (see also \cite{Gon17_2}). Theorem~\ref{thm:main} has a similar application to the cubic nonlinear wave equation in \cite{Giu18}.

 In the fourth section we consider the deficit functional $\psi$, defined as
 \begin{equation*}
 	\begin{array}{cc}
		\psi(\ubold(0)):=\mathcal{A}_d^{p}\norm{\ubold(0)}^{p}_{\Hdot^{1/2}\times\Hdot^{-1/2}}-\norm{u}_{L^p(\R^{1+d})}^{p},& p:=2\frac{d+1}{d-1},
	\end{array}
\end{equation*}
so that $\psi$ is zero at the supposed extremizers~\eqref{eq:seeding_maximizer}. Now for~\eqref{eq:seeding_maximizer} to be extremizing for the Strichartz inequality~\eqref{eq:StrichartzIneq}, it must be critical for $\psi$ in the sense that the first derivative of $\psi$ must also vanish there.  We will prove the following result disproving Foschi's conjecture in even dimensions.
 
\begin{thm}\label{put} The data \eqref{eq:seeding_maximizer} are  critical for $\psi$
if and only if $d\ge 2$ is odd.
 \end{thm}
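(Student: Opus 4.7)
The plan is to linearize $\psi$ at $\fboldstar = (f_\star, 0)$ with $f_\star(x) = (1+|x|^2)^{-(d-1)/2}$, reduce criticality to a single identity between initial data, split it into spherical-harmonic components on the Einstein cylinder, and verify that the vanishing of the resulting family of integrals is dictated by the parity of $d$. Computing the Gâteaux derivative in direction $\gbold = (g_0, g_1)$ with corresponding wave solution $v$, one obtains
\begin{equation*}
	\tfrac1p\, d\psi(\fboldstar)[\gbold] = \StrichartzConstant^p \|\fboldstar\|^{p-2}\!\!\int_{\R^d}\!\! g_0 \, \mhDelta f_\star \, dx \;-\; \int_{\R^{1+d}}\!\! |u_\star|^{p-2} u_\star \, v \, dt\, dx;
\end{equation*}
the $g_1$ term in the Sobolev pairing is absent because the second component of $\fboldstar$ vanishes. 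Since $u_\star$ is even in $t$, so is $F := |u_\star|^{p-2} u_\star$; writing $v = \cos(t\sqrtDelta) g_0 + \sqrtDelta^{-1}\sin(t\sqrtDelta) g_1$ and integrating over $t \in \R$ annihilates the $g_1$ contribution by odd symmetry, and criticality reduces to the identity
\begin{equation*}
	2\int_0^\infty \cos(t\sqrtDelta)\, F(t, \cdot)\, dt \;=\; \StrichartzConstant^p \|\fboldstar\|^{p-2}\, \mhDelta f_\star
\end{equation*}
as radial functions in $\Hdotminusonehalf(\R^d)$.

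I would next lift the problem to the Einstein cylinder $\R \times \SSS^d$ via the Penrose conformal map $u = \Omega^{(d-1)/2} U$. The stereographic pullback of $f_\star$ is the constant $1$ on $\SSS^d$, hence $U_\star(\tau, \omega) = \cos(\omega_0 \tau)$ with $\omega_0 := (d-1)/2$ depends only on $\tau$. The Sobolev pairing transforms into the conformal Sobolev pairing on $\SSS^d$, which is sensitive only to the $\ell = 0$ (constant) component of the lifted perturbation $\tilde g_0$; the spacetime integral, carried out over the Minkowski diamond $D \subset \R \times \SSS^d$, couples however to \emph{every} zonal spherical-harmonic component $Y_\ell$ of $\tilde g_0$, through the family
\begin{equation*}
	K_\ell \;:=\; \int_{-\pi}^\pi |\cos(\omega_0 \tau)|^{p-2}\cos(\omega_0 \tau)\, \cos(\omega_\ell \tau)\, \mathcal{I}_\ell(\tau)\, d\tau, \qquad \omega_\ell := \ell + \omega_0,
\end{equation*}
where $\mathcal{I}_\ell(\tau)$ is the $\tau$-dependent integral of $Y_\ell$ over the spherical cap $\{\rho < \pi - |\tau|\}$, expressible in closed form via Gegenbauer polynomials. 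Criticality of $\fboldstar$ is thus equivalent to $K_\ell = 0$ for every $\ell \ge 1$, with $K_0$ fixing the constant $\StrichartzConstant$.

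The dichotomy between odd and even $d$ enters through the frequencies $\omega_\ell$, which are integer-valued for odd $d$ and half-integer-valued for even $d$. For odd $d$ the exponent $(d-1)/2$ is an integer, $\Omega^{(d-1)/2}$ is single-valued and smooth across $\partial D$, so the integrand in $K_\ell$ admits a smooth periodic extension to the full Einstein cylinder; the orthogonality of zonal modes $\cos(\omega_\ell\tau)Y_\ell$ across a full period then forces $K_\ell = 0$ for every $\ell \ge 1$. For even $d$ the exponent is a half-integer, $\Omega^{(d-1)/2}$ has a branch across $\partial D$, equivalently the retarded representation of $u_\star$ develops a Hadamard tail from the failure of the strong Huygens principle, and the analogous orthogonality breaks; a direct evaluation at $\ell = 1$ should then yield $K_1 \ne 0$, exhibiting an explicit radial perturbation $g_0$ along which the first variation of $\psi$ does not vanish. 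The main obstacle is the closed-form evaluation of $K_\ell$ with enough precision to make the parity of $d$ visible: this amounts to pairing the Fourier series of $|\cos|^{p-2}\cos$ (a hypergeometric object when $p$ is non-integer) with the cap-integrated Gegenbauer polynomials $\mathcal{I}_\ell$, and isolating a Gamma-function factor such as $\sin(\pi(d-1)/2)$ whose vanishing exactly in odd $d$ accounts for the dichotomy.
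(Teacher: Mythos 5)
Your overall architecture (linearize $\psi$ at $\fboldstar$, pass to the Einstein cylinder, reduce criticality to a family of integrals $K_\ell$ against zonal harmonics, and let the integrality of $\omega_\ell=\ell+\frac{d-1}{2}$ drive the dichotomy) is the same as the paper's. The odd-dimensional half is essentially right, with one step under-justified: what lets you replace the diamond $\Pcal(\R^{1+d})$ by the full cylinder is not mere smooth periodicity of the integrand but the antipodal symmetry $V(T+\pi,-X)=V(T,X)$, which holds exactly when $\frac{d-1}{2}\in\Z$ (Lemma~\ref{lem:sym_trick} and Corollary~\ref{cor:sym_trick}); once on the full cylinder, orthogonality of $Y_\ell$ to constants kills $\ell\ge 1$ and oddness in $T$ kills the surviving $\ell=0$ piece. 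Your elimination of the $g_1$ directions by $t\mapsto-t$ oddness is fine, and slightly cleaner than the paper's treatment.

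The even-dimensional half has a genuine gap: you propose to exhibit non-criticality by evaluating $K_1$, but $K_1=0$ in \emph{every} dimension. The zonal degree-one direction $F_0=Y_{1,\obold}\propto X_0$ pulls back to $f_0\propto X_0\,\OmegaZero^{\frac{d-1}{2}}$, which is exactly the scaling generator applied to $\fboldstar$ (entry 5 of table \eqref{eq:symmetry_table}); since the normalized dilation preserves both $\norm{\cdot}_{\PhaseSpace}$ and $\norm{\cdot}_{L^p(\R^{1+d})}$ in all dimensions, $\psi$ vanishes identically along that orbit and its first variation there is zero regardless of parity (the non-zonal degree-one directions are likewise translation generators). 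The first direction that can detect non-criticality is $\ell=2$, and proving $K_2\ne 0$ for even $d$ is where the real work lies: after the cap integral $\mathcal{I}_2(\tau)=C_d\cos\tau\,(\sin\tau)^d$, one must pair the non-polynomial Fourier series of $\abs{\cos\frac{d-1}{2}T}^{p-2}$ (supported on frequencies $m(d-1)$, Lemma~\ref{lem:FractFourierSeries}) against a trigonometric polynomial of degree $2(d+1)$, compute the handful of surviving coefficients, and establish the two sign inequalities \eqref{eq:sign_relations} to conclude $K_2=(-1)^{\frac d2+1}c_d$ with $c_d>0$. No clean $\sin(\pi(d-1)/2)$ factor emerges; the dichotomy in the even case rests on these explicit estimates, which your proposal does not yet supply.
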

The case of spatial dimension $d=2$ is especially surprising. Indeed, in the aforementioned~\cite[eq.~(46)]{Foschi07}, Foschi proved that $u_\pm(0)=(1+\lvert\cdot\rvert^2)^{-1/2}$, which is the same function that appears in~\eqref{eq:seeding_maximizer}, is an extremizer of the closely related half-wave estimate $\lVert u_\pm\rVert_{L^6(\mathbb R^{1+2})}\le (2\pi)^{-1/6}\lVert u_\pm(0) \rVert_{\dot{H}^{1/2}},$ in which $\partial_t u_\pm=\pm i\sqrt{-\Delta}u_\pm$; see also~\cite{BeJeOzSa17}. 
 
 In the fifth section, we prove the lower bound of Theorem~\ref{thm:main}. For this we must show that a spectral gap, associated with the second derivative of $\psi$, is positive. This is achieved using the Penrose transform, introduced in the third section. Under this transformation, the extremizing pair \eqref{eq:seeding_maximizer} 
is mapped to the constant initial data pair $(1/2,0)$, enabling explicit computations. 
	Compactness arguments will also be required to extend a local version of Theorem \ref{thm:main} to the whole space $\Hdot^{1/2}\times \Hdot^{-1/2}$. For this we will require a profile decomposition due to Ramos \cite{Ramos12}, also presented in the third section. 
 
We end the introduction with a mention of the recent paper~\cite{GoNe20}, in which sharp Strichartz estimates for the wave, the half-wave and the Schr\"odinger equations are studied by means of spacetime transformations such as the Penrose and the Lens transform.  
  
\section{Abstract upper bounds} \label{sec:upper_bounds}
In this section, $X$ will denote a measure space and $H$ will denote a real or complex Hilbert space, with scalar product $\langle, \rangle$ and norm $\lVert\cdot\rVert$.  
\begin{prop}\label{prop:up_bound} For $1<p<\infty$, let $S:H\to L^p(X)$ be a linear bounded
operator, with operator norm $\lVert S\rVert $. Then
\begin{equation}\label{eq:up_bound_main}
    \lVert S\rVert^2 \lVert f\rVert^2 - \lVert Sf\rVert_p^2 \le \lVert S\rVert^2 d(f,M_S)^2,
\end{equation}
where $M_S=\{v\in H:\lVert Sv\rVert_p=\lVert S\rVert \lVert v\rVert \}$ and $\displaystyle d(f, M_S)=\inf_{v\in M_S} \lVert f-v\rVert$.
\end{prop}
We remark that $M_S$ is never empty, as $0\in M_S$. Also, if $v\in M_S$ then $\lambda v\in M_S$ for all scalars $\lambda$; in particular, either $M_S=\{0\}$ or $M_S$ contains nonzero elements of arbitrarily small norm.
\begin{proof}[Proof of Proposition~\ref{prop:up_bound} ]
If $S=0$ or $M_S=\{0\}$, then~\eqref{eq:up_bound_main} is trivially true. Otherwise, we let $D=d(f,M_S)$, and for $\eps>0$ we consider a $v\in M_S$ such that $\lVert f-v\rVert^2\le
D^2+\eps$. By the previous remark, we can assume that $v\ne 0$, which implies that $Sv\ne 0$, because $\lVert Sv\rVert_p = \lVert S\rVert \lVert v\rVert\ne 0$. 

Now we write $g=f-v$ and we define, for $t\in\mathbb{R}$,
\begin{equation}\label{eq:h_one_two}
    \begin{array}{cc}
        h_1(t)=\lVert S(v+tg)\rVert_p^2-\lVert Sv\rVert_p^2, & 
h_2(t)=\lVert S(v+tg)\rVert_p^2-\lVert S\rVert^2\lVert v+tg\rVert^2.
    \end{array}
\end{equation}
As a function of $t$, each of $h_1,h_2$ is a difference of two convex functions and hence is left and right differentiable at every point. In addition, both are differentiable at $t=0$, since $Sv\neq 0$; see, for example,~\cite[Theorem 2.6]{LieLos01}

Now, $h_2$ has a maximum at zero, so $h_2'(0)=0$. Since $h_1$
is convex and $h_1(0)=0$, we have $h_1(1)\ge h_1'(0)$. Therefore
\begin{equation}\label{eq:big_proof_sec_two}
    \begin{split}
        h_1(1)=\lVert Sf\rVert_p^2-\lVert Sv\rVert_p^2 &\ge h_1'(0)=(h_1-h_2)'(0)= 2\lVert S\rVert^2\Re(\langle v,g\rangle) \\
         &=\lVert S\rVert^2(\lVert v+g\rVert^2 - \lVert g\rVert^2-\lVert v\rVert^2).
    \end{split}
\end{equation} 
Recalling that $v+g=f$ and $\lVert Sv\rVert_p=\lVert S\rVert \lVert v\rVert$, this yields
\begin{equation}\label{eq:end_proof_sec_two}
    \lVert Sf\rVert_p^2 \ge \lVert S\rVert^2(\lVert v+g\rVert^2-\lVert g\rVert^2)\ge \lVert S\rVert^2\lVert f\rVert^2 -(D^2+\eps)\lVert S\rVert^2,
\end{equation}
and since this holds for arbitrary $\eps>0$, the desired conclusion~\eqref{eq:up_bound_main} follows.
\end{proof}
The upper bound in Theorem~\ref{thm:main} is an immediate consequence of the latter proposition, obtained by specializing the operator $S$ to the wave propagator $S_t\colon \dot{\mathcal{H}}^{1/2}(\mathbb R^3) \to L^4(\mathbb R^{1+3})$; see the next section. 
\begin{rem}\label{rem:frank_follows}
	For $s\in (0, d)$, letting $S$ denote the fractional Sobolev embedding $\dot{H}^{s/2}(\mathbb R^d)\hookrightarrow L^{2d/(d-s)}(\mathbb R^d)$,  Proposition~\ref{prop:up_bound} gives an alternative proof of the upper bound of~\cite{CheFraWet12}.
\end{rem} 
\begin{rem}\label{rem:not_necessary_Lp}
    The only property of the target space $L^p(X)$ used in the proof of Proposition~\ref{prop:up_bound} is that its norm is Gateaux differentiable away from the origin. The Banach spaces with this property are called \emph{smooth}, and they admit various alternative characterizations; see~\cite[Section~5.4]{Meg98}.
\end{rem}

\section{Notation and preliminaries}
In the remainder of this paper, all functions will be real-valued unless otherwise stated. We use the following notation for the space of initial data: 
\begin{equation}\label{eq:PhaseSpaceNotation}
	\PhaseSpace(\R^d)= \Hdot^{1/2}(\R^d)\times \Hdot^{-1/2} (\R^d).
\end{equation}
Elements of $\PhaseSpace(\R^d)$, denoted with boldface, are considered as row or column vectors indifferently;
\begin{equation}\label{eq:boldface_notation}
	\fbold=(f_0, f_1)= \begin{bmatrix} f_0 \\ f_1\end{bmatrix} \in \PhaseSpace.
\end{equation}
The space $\PhaseSpace(\R^d)$ is a real Hilbert space, obtained by taking the completion of the Schwartz space with the scalar product 
\begin{equation}\label{eq:PhaseSpaceScalProd}
	\Braket{\fbold| \gbold}_{\PhaseSpace(\R^d)} = \int_{\R^d} \mhDelta^\sReg f_0 \cdot g_0\, dx  + \int_{\R^d} \mhDelta^{-\frac12}f_1 \cdot g_1\, dx,
\end{equation}
with a standard abuse of notation in the second integral, since $f_1, g_1$ are just distributions. The symbol $\bot$ will reflect orthogonality with respect to this scalar product.

We denote by $\fboldstar$ the following element of $\PhaseSpace(\R^d)$: 
\begin{equation}\label{eq:extremizer_seed}
	\fboldstar= \Tonde{ 2^{\frac{d-1}{2}}(1+\abs{\cdot}^2)^{-\frac{d-1}{2}} , 0 },
\end{equation}
which is an extremizer of the Strichartz inequality when $d=3$. As we mentioned in the introduction, the Strichartz inequality is invariant under the action of a Lie group of symmetries, which we now represent on $\PhaseSpace$. Most of the following definitions and computations will be needed only in the case $d=3$, but there is no added difficulty in considering the general case $d\ge 2$. For $t\in\R$, the symbols $S_t$ and $\Svec_t$ will denote the wave propagators, defined by
\begin{equation}\label{eq:wave_propagator}
	S_t\fbold = \cos(t\mhDelta^\frac12)f_0  + \frac{\sin(t\mhDelta^\frac12)}{\mhDelta^\frac12} f_1
\end{equation}
and 
\begin{equation}\label{eq:wave_propagator_vector}
	\Svec_t\fbold=\begin{bmatrix} \cos(t\mhDelta^\frac12) & \frac{\sin(t\mhDelta^\frac12)}{\mhDelta^\frac12}  \\ 
		-\sin(t\mhDelta^\frac12)\mhDelta^\frac12 & \cos(t\mhDelta^\frac12) \end{bmatrix} 
		\begin{bmatrix} f_0 \\ f_1\end{bmatrix} .
\end{equation}
For $\theta\in\SSS^1:=\R/2\pi\Z$, the symbol $\Ph_{\theta}$ will denote phase shift; 
\begin{equation}\label{eq:PhaseSymmetry}
	\Ph_\theta \fbold = \PhaseMatrix{\theta} \begin{bmatrix} f_0 \\ f_1 \end{bmatrix},
\end{equation}
which is characterized by
\begin{equation}\label{eq:PhaseCharacterization}
	\Svec_t\Ph_\theta \fbold = \Ph_\theta\Svec_t \fbold = \begin{bmatrix} \Ds \cos(t\sqrtDelta +\theta ) & \Ds \frac{\sin(t\sqrtDelta +\theta)}{\sqrtDelta }  \\ 
		\Ds -\sin(t\sqrtDelta +\theta )\sqrtDelta  &\Ds \cos(t\sqrtDelta +\theta) \end{bmatrix} 
		\begin{bmatrix} f_0 \\ f_1\end{bmatrix},
\end{equation}
For $\zeta_j\in\R$ and $j=1,\ldots, d$, the symbol $L_{\zeta_j}^j$ will denote the Lorentz boost along the $x_j$ axis, given by 
\begin{equation}\label{eq:Lorentz_boost}
		L_{\zeta_j}^j \fbold =(\left.u_{\zeta_j}\right|_{t=0}, \left.\partial_t u_{\zeta_j}\right|_{t=0}).
\end{equation}
Here $u(t, x)=S_t\fbold$ and 
\begin{align}\label{eq:Lorentz_uboost}
	u_{\zeta_1}(t, x)&= u(t \cosh \zeta_1  + x_1 \sinh\zeta_1 , t\sinh \zeta_1  + x_1 \cosh\zeta_1 ,x_2,\ldots , x_d) ,\\
	\notag 
	u_{\zeta_2}(t, x)&= u(t \cosh \zeta_2 + x_2 \sinh\zeta_2, x_1, t\sinh \zeta_2  + x_2\cosh\zeta_2 ,\ldots, x_d) ,\\
	\notag   & \vdotswithin{=} \\
	\notag u_{\zeta_d}(t, x)& =  u(t \cosh \zeta_d + x_d \sinh\zeta_d ,x_1, \ldots,x_{d-1}, t \sinh \zeta_d  + x_d \cosh\zeta_d).
\end{align} 
We introduce the collective parameter $\alphabold\in\SSS^1\times \R^{2d+2}\times \SO(d)$;
\begin{equation}\label{eq:alpha_collective_index}\begin{array}{cc}
	\alphabold=
		(\theta, t_0, \zeta_1,\ldots ,\zeta_d, \sigma, x_0, A), &  \theta\in \SSS^1, t_0,\zeta_j,\sigma\in \R, x_0\in \R^d, A\in \SO(d).
		\end{array}
\end{equation}
Then for $\fbold\in\Hcaldot^\frac12$ we define the linear operator
\begin{equation}\label{eq:cGamma_notation}
	\Gamma_{\!\alphabold}\fbold\!=\!
			\Svec_{t_0}\Ph_\theta L_{\zeta_1}^1\ldots L_{\zeta_d}^d
			 \Big( e^{\frac{d-1}{2}\sigma} f_0\Tonde{e^\sigma A(\cdot + x_0)}, e^{\frac{d+1}{2}\sigma} f_1\Tonde{e^\sigma A(\cdot + x_0)} \Big),
\end{equation}
and we write, conventionally, $\Gamma_{\!\obold}$ to denote the identity operator. We have the invariances 
\begin{equation}\label{eq:Gamma_alphabold_is_unitary}
	\begin{array}{ccc}
	\norm{\Gamma_{\!\alphabold} \fbold}_{\PhaseSpace} =\norm{\fbold}_{\PhaseSpace}&\text{and} & \norm{S_t\Gamma_{\!\alphabold} \fbold}_{L^4(\R^{1+d})}=\norm{S_t\fbold}_{L^4(\R^{1+d})}.
	\end{array}
\end{equation}
We will prove these after introducing some more notation.
\begin{rem}\label{rem:UnitGrp}	
    The full action of the symmetry group on the Strichartz inequality is the transformation $\fbold\mapsto c\Gamma_{\!\alphabold} \fbold$. This notation has been chosen to highlight the difference between the multiplicative transformation $\fbold\mapsto c \fbold$, which is a symmetry of the inequality but does not satisfy \eqref{eq:Gamma_alphabold_is_unitary}, and the transformation $\Gamma_{\!\alphabold}$, which preserves both sides of the inequality. 
    
    In this regard, we also mention that the second identity in~\eqref{eq:Gamma_alphabold_is_unitary} is specific of the space $L^4(\R^{1+d})$, as the operator $\Ph_\theta$ does not seem to preserve the $L^p(\R^{1+d})$ norm unless $p=4$. For $d\ne 3$, such a  $L^4$ norm may be infinite for some $\fbold\in \dot{\mathcal{H}}^{1/2}$; in this case, both sides of the second identity in~\eqref{eq:Gamma_alphabold_is_unitary} are infinite.
\end{rem}
As shown by Foschi \cite{Foschi07},  the set $\Mrom$ of extremizers of the sharp three-dimensional Strichartz inequality coincides with the orbit of $\fboldstar$;
\begin{equation}\label{eq:Mrom_Manifold}
	\begin{split}
	\Mrom &= \Set{  \fbold\in\Hcaldot^\frac12(\R^3) | \norm{S_t\fbold}_{L^4(\R^{1+3})} 
	= \StrichartzConstant \norm{\fbold}_{\Hcaldot^\frac12}} \\
	&=  \Set{ c \Gamma_{\!\alphabold} \fboldstar | c\in\R , \alphabold\in
		 \SSS^1 \times\R^{8}\times \SO(3)}.
	\end{split}
\end{equation}
\begin{rem}\label{rem:complex_maximizers}
    We choose to work with real-valued functions only, since it is a natural assumption in the context of the wave equation, also from the point of view of optimal constants and extremizers. For example, we mention that, as it is proved in~\cite[Proposition~3.1]{GoNe20}, the real or the imaginary part of a complex extremizer must be a real extremizer. Note, however, that in~\cite{Foschi07} complex solutions are considered, so \eqref{eq:PhaseSymmetry} is replaced by two independent symmetries and the set of extremizers $\Mrom$ is larger in this case. 
\end{rem}

The set $\Mrom\setminus\Set{\obold}$ has the structure of a finite-dimensional differentiable manifold. The tangent space to $\Mrom$ at $\obold\ne \fbold\in \Mrom$ is
\begin{equation}\label{eq:tangent_space_Mrom}
	T_\fbold \Mrom = \Span \Set{ \fbold, \left.\nabla_{\!\alphabold} \Gamma_{\!\alphabold} \fbold\right|_{\alphabold=\boldsymbol{0}}}, 
\end{equation}
where $\nabla_{\!\alphabold}$ is the list of derivatives with respect to the parameters \eqref{eq:alpha_collective_index}. We refer to such derivatives as the \emph{generators} of the symmetry group. We will give the explicit expression of the generators in Table~\ref{tab:generator_table}.
\begin{rem}\label{rem:rotations}
    The generators associated to the parameter $A\in \SO(d)$ are the differential operators $x_i\partial_{x_j}-x_j\partial_{x_i}$, for $1\le i<j\le d$. However, since we will always work in the orbit of $\fboldstar$, which is radially symmetric, these generators will play no role in this paper.
\end{rem}
We will give an explicit description of the tangent space at $\fboldstar$; this suffices to describe the tangent space at all $\fbold\in\Mrom\setminus\{\obold\}$, as the following proposition shows.
\begin{prop}\label{prop:TangentSpaces}
For all $c\ne 0$, 
\begin{equation}\label{eq:pushforward_tangent_spaces}
	T_{c\Gamma_{\!\alphabold} \fboldstar} \Mrom=\Gamma_{\!\alphabold}\Tonde{ T_{\fboldstar}\Mrom}.
\end{equation}
\end{prop}
\begin{proof}
By definition, 
\begin{equation}\label{eq:def_tansp_pushforward}
	T_{c\Gamma_{\!\alphabold} \fboldstar} \Mrom=\Span\Set {c\Gamma_{\!\alphabold}\fboldstar, c \left.\nabla_{\!\betabold}(\Gamma_{\!\betabold} \Gamma_{\!\alphabold} \fboldstar)\right\rvert_{\betabold=\obold} }.
\end{equation}
By basic Lie theory, the map $\Gamma_{\!\betabold}\mapsto (\Gamma_{\!\alphabold})^{-1}\Gamma_{\!\betabold}\Gamma_{\!\alphabold}$ is a Lie group homomorphism. Thus, there is a differentiable function $\gammabold=\gammabold(\betabold)$, with $\gammabold(\obold)=\obold$, such that $(\Gamma_{\!\alphabold})^{-1}\Gamma_{\!\betabold}\Gamma_{\!\alphabold}=\Gamma_{\!\gammabold(\betabold)}$. By the chain rule,
 \begin{equation}\label{eq:proof_tansp_pushfo_two}
	\left.\partial_{\beta_j}(\Gamma_{\!\betabold} \Gamma_{\!\alphabold} \fboldstar)\right\rvert_{\betabold=\obold} = \Gamma_{\!\alphabold}(\left.\partial_{\beta_j}\Gamma_{\!\gammabold(\betabold)} \fboldstar\right\rvert_{\betabold=\obold})=\Gamma_{\!\alphabold}\sum_{k}c_{kj}\left.\partial_{\gamma_k}\Gamma_{\!\gammabold}\fboldstar\right|_{\gammabold=\obold},
\end{equation}
	where $c_{kj}:=\frac{\partial \gamma_k}{\partial \beta_j}(\obold)$. This proves that $T_{c\Gamma_{\!\alphabold} \fboldstar}\Mrom \subset \Gamma_{\!\alphabold}(T_\fboldstar \Mrom)$. The reverse inclusion is proven in the same way.
\end{proof}

\begin{table}[!]
\begin{equation}\label{eq:generator_table}
	\begin{array}{c|c|c}
		&\text{Derivative} & \text{Applied to }c\Gamma_{\!\alphabold} \fbold \text{ at }c=1, \alphabold=\obold\\ 
		\toprule
	1&	\frac{\partial}{\partial c} & \fbold\\
	2&	\frac{\partial}{\partial t_0}&  \begin{bmatrix} 0 & 1 \\ \Delta & 0\end{bmatrix}\fbold\\ 
	3&	\frac{\partial}{\partial \theta}& \begin{bmatrix} 0 & \mhDelta^{-\frac12} \\ -\mhDelta^\frac12  & 0\end{bmatrix} \fbold\\
	4&	\frac{\partial}{\partial \zeta_j}& \begin{bmatrix} 0& x_j \\ x_j\Delta + \frac{\partial}{\partial x_j} & 0 \end{bmatrix} \fbold \quad (j=1,2,\ldots, d)\\
	5&	\frac{\partial}{\partial \sigma}& \begin{bmatrix} \frac{d-1}{2}+ x\cdot \nabla & 0\\ 0 & \frac{d+1}{2} + x\cdot \nabla\end{bmatrix} \fbold \\
	6&	\nabla_{\!x_0}& \begin{bmatrix} \frac{\partial}{\partial x_j} & 0 \\  0 & \frac{\partial}{\partial x_j} \end{bmatrix}\fbold\quad (j=1,2,\ldots, d).
		\end{array}
\end{equation}
\caption{Symmetry generators.}
\label{tab:generator_table}
\end{table}
\begin{proof}[Proof of \eqref{eq:Gamma_alphabold_is_unitary}]
	The proof of the first identity in~\eqref{eq:Gamma_alphabold_is_unitary} reduces to a check that the operators in the right column of entries 2-6 of Table~\ref{tab:generator_table} are skew-adjoint on $\Hcaldot^\frac12(\R^d)$. We remark that this is true for any dimension $d$. The second identity in~\eqref{eq:Gamma_alphabold_is_unitary}, concerning invariance of the $L^4(\R^{1+d})$ norm, is obvious for all symmetries except for $\Ph_\theta$ (defined in \eqref{eq:PhaseSymmetry}). This invariance is proved in~\cite[equation~(2.5)]{BezRogers13} in the case $d=5$, but the proof applies verbatim to arbitrary $d\ge 2$.
\end{proof}

We now cast in our notation the profile decomposition result of Ramos~\cite[Theorem~3.1]{Ramos12} which extends the profile decomposition~\cite{BaGe99} of Bahouri and G\'erard (see also Merle and Vega~\cite{MeVe98} for the Schr\"odinger equation) to the regularity $\PhaseSpace$ and includes the Lorentz symmetry. 

\begin{thm}\label{thm:profile_decomposition}
 Let $\fbold_n$ be a bounded sequence in $\PhaseSpace(\R^3)$. Then there exists a finite or infinite sequence $\Set{\fbold^j\ :\ j=1, 2\ldots}\subset \PhaseSpace$ and corresponding sequences of transformations $\Gamma_{\!\alphabold_n^j}$ (defined in \eqref{eq:cGamma_notation}) such that, up to passing to a subsequence, 
 \begin{equation}\label{eq:profile_decomposition}
 	\fbold_n=\sum_{j=1}^J \Gamma_{\!\alphabold_n^j} \fbold^j + \rbold^J_n,
\end{equation}
where the remainder term $\rbold^J_n$ satisfies 
\begin{equation}\label{eq:profile_decomposition_smallness_remainder_term}
	\lim_{J\to\infty} \limsup_{n\to \infty} \norm{S_t\rbold^J_n}_{\Lfour} =0.
\end{equation}
Moreover, for each $J\ge 1$ the following Pythagorean expansions hold for $n\to \infty$:
\begin{equation}\label{eq:Pythagorean_energy}
\Ds\norm{\fbold_n}_\PhaseSpace^2 = \sum_{j=1}^J \norm{ \fbold^j}_\PhaseSpace^2 + \norm{\rbold^J_n}_\PhaseSpace^2 + o(1), 
\end{equation}
and 
\begin{equation}\label{eq:Pythagorean_strichartz}
	\norm{S_t\fbold_n}_\Lfour^4 = \sum_{j=1}^J \norm{ S_t \fbold^j }_\Lfour^4 + \norm{S_t \rbold^J_n}_\Lfour^4 + o(1). 
\end{equation} 
\end{thm}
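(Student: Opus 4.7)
The plan is to follow the now-standard iterative extraction scheme pioneered by Bahouri and G\'erard, adapted to include Lorentz boosts as in Ramos, with a refined Strichartz estimate as the main analytic input. The refinement needed here, which incorporates the angular frequency localization necessary for Lorentz-invariant profile decomposition, should take the form
\begin{equation*}
	\norm{S_t\fbold}_{\Lfour} \lesssim \norm{\fbold}_{\PhaseSpace}^{\theta}\, \norm{S_t\fbold}_{Y}^{1-\theta}
\end{equation*}
for some $\theta\in(0,1)$ and some weak norm $Y$ (a Besov-type space capturing simultaneous localization in dyadic shells of frequency modulus and on caps on the sphere of directions) that is invariant under the full symmetry group $\Gamma_\alphabold$, including Lorentz boosts. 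Such a refined estimate upgrades the bilinear/restriction-style improvements to the Strichartz inequality that account for null-direction concentration.

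Second, I would run the extraction. If $\limsup_n \norm{S_t\fbold_n}_{\Lfour}=0$, there is nothing to extract and the decomposition is trivial with $\mathbf{r}_n^0 = \fbold_n$. Otherwise, the refined estimate together with the boundedness of $\fbold_n$ in $\PhaseSpace$ forces $\liminf_n \norm{S_t\fbold_n}_{Y}>0$, which by duality and a pigeonhole argument produces parameters $\alphabold_n^1=(t_0^n,\theta_n,\zeta_1^n,\ldots,\zeta_d^n,\sigma_n,x_0^n)$ such that, along a subsequence, $\Gamma_{-\alphabold_n^1}\fbold_n \rightharpoonup \fbold^1$ weakly in $\PhaseSpace$ with $\fbold^1\neq \obold$ and $\norm{\fbold^1}_\PhaseSpace\gtrsim \liminf_n \norm{S_t\fbold_n}_Y$. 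Subtracting, set $\mathbf{r}_n^1=\fbold_n-\Gamma_{\alphabold_n^1}\fbold^1$ and iterate the procedure on the new sequence; at each step, the $\PhaseSpace$-norm drops by a quantitative amount encoded in the refined estimate.

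Third, I would prove the asymptotic orthogonality of the extracted parameters: for $j\neq k$, either $\Gamma_{-\alphabold_n^k}\Gamma_{\alphabold_n^j}\fbold^j \rightharpoonup \obold$ weakly in $\PhaseSpace$, or else $\fbold^k$ would absorb a nonzero amount of the $j$th profile, contradicting the extraction mechanism. From this orthogonality the Pythagorean expansion \eqref{eq:Pythagorean_energy} for the Hilbert norm follows immediately since $\Gamma_\alphabold$ is unitary by \eqref{eq:Gamma_alphabold_is_unitary}, and \eqref{eq:Pythagorean_strichartz} for the $L^4$ norm follows from a cross-term estimate: the $L^4$ inner products of $S_t\Gamma_{\alphabold_n^j}\fbold^j$ and $S_t\Gamma_{\alphabold_n^k}\fbold^k$ vanish in the limit thanks to the orthogonality of parameters and the mutual invariance of both sides of the Strichartz inequality under $\Gamma_\alphabold$. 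Finally, the energy Pythagorean expansion forces $\sum_j \norm{\fbold^j}_\PhaseSpace^2<\infty$, so the profiles decay in size, and plugging $\mathbf{r}_n^J$ into the refined Strichartz estimate gives \eqref{eq:profile_decomposition_smallness_remainder_term}.

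The hard part is the refined Strichartz estimate in a form compatible with all the symmetries in $\Gamma_\alphabold$, particularly the Lorentz boosts $L_{\zeta_j}^j$. Unlike the translation, dilation and time-translation symmetries, which are handled by classical frequency-physical Littlewood-Paley decompositions, the boosts act by pushing the Fourier support toward the null cone, so one needs a decomposition adapted to caps on the sphere at each dyadic frequency scale, and one must show that large $\norm{S_t\fbold}_\Lfour$ forces concentration in one such \enquote{cap at scale}\,---\,this is precisely the ingredient worked out by Ramos, which I would import rather than reprove. With this tool in hand, the rest of the scheme is standard Hilbert-space diagonal extraction.
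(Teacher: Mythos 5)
The paper does not prove this theorem: it is a restatement, in the paper's notation, of a result of Ramos \cite{Ramos12}, which is simply cited. There is therefore no ``paper's own proof'' to compare against. Your sketch is a reasonable high-level reconstruction of how such a proof goes---a refined Strichartz estimate controlling the $\Lfour$ norm by a weak norm that detects concentration at a dyadic frequency scale and on an angular cap, iterative extraction of profiles via weak limits along the symmetry group, asymptotic orthogonality of the parameters, and Pythagorean expansions from unitarity and vanishing cross terms---and you correctly identify the genuinely hard ingredient (a refined estimate compatible with the Lorentz boosts, not just with translation/dilation/modulation) as Ramos' contribution, which you explicitly import rather than reprove. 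Since the paper itself offloads the entire theorem to the same citation, and your proposal offloads precisely the same hard kernel to it, the two are consistent; your write-up simply makes explicit the scaffolding that the paper treats as implicit by reference.

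One small caution on a phrase you use: you describe a single-inequality refinement $\norm{S_t\fbold}_\Lfour \lesssim \norm{\fbold}_\PhaseSpace^\theta\norm{S_t\fbold}_Y^{1-\theta}$ with $Y$ ``invariant under the full symmetry group.'' In Ramos' actual argument the weak norm is not itself exactly $\Gamma_\alphabold$-invariant; rather, one needs the weaker but sufficient property that smallness of the weak norm is preserved under the group action in the limit, and the extraction proceeds via a pigeonhole over dyadic scales and caps rather than duality against a single invariant norm. This is a matter of mechanism rather than correctness of the final statement, but if you were to fill in the details, it is where the argument is more delicate than the Bahouri--G\'erard template.
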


We introduce now the Penrose transform; see \cite{Pen64}. We do this for general spatial dimension $d\ge 2$. The Penrose transform is a map $\Pcal$ of  $\R^{1+d}$ onto a bounded region $\Pcal (\R^{1+d})$ of the Lorentzian manifold $\R\times \SSS^d$. Adopting the notation of H\"ormander \cite[Appendix A.4]{Hor97} we parameterize 
\begin{equation}\label{eq:sphere_definition}
	\SSS^d = \Set{ X=(X_0, \Xvec) \in \R\times \R^{d} : X_0^2+X_1^2+\ldots + X_d^2=1}
\end{equation}
by the polar coordinates 
\begin{equation}\label{eq:sphere_polar_coordinates}
	\begin{array}{cc}
	X=(X_0, \Xvec)=(\cos(\RPolar) , \sin(\RPolar)\, \omega) , & \RPolar\in [0, \pi],\ \omega \in \SSS^{d-1} .
	\end{array}
\end{equation}
The Penrose transform is the map
\begin{equation}\label{eq:Penrose_map_arrows}
	\begin{array}{rcl}
		\Pcal\colon \R^{1+d}&\to& \R\times \SSS^d \\ 
		(t, r\omega) & \mapsto & (T, \cos \RPolar, \sin \RPolar\,\omega),
	\end{array}
\end{equation}
where $r=\abs{x}, \omega = \frac{x}{\abs{x}}$ and 
\begin{equation}\label{eq:Penrose_map_equations}
	\begin{array}{c}
		T=\arctan(t+r)+\arctan(t-r),    \\ 
		R=\arctan(t+r)-\arctan(t-r).
	\end{array}
\end{equation}
The image $\Pcal(\R^{1+d})$ is the region 
\begin{equation}\label{eq:image_of_penrose}
	\Pcal(\R^{1+d}) = \Set{ \Big(T, (\cos R, \sin R\,\omega)\Big)\in\R\times \SSS^d | \begin{array}{c} 
		-\pi<T<\pi \\  
		0\le R\le \pi-\abs{T} \\  
		\omega\in \SSS^{d-1}
		\end{array}
		}.
\end{equation}
The map $\Pcal$ is conformal in the sense that, applying the change of coordinates \eqref{eq:Penrose_map_equations}, one has 
\begin{equation}\label{eq:conformal_transformation}
	dT^2-dX^2 = \Omega^2\Tonde{ dt^2 - dx^2},
\end{equation}
where the conformal factor $\Omega$ is given by
\begin{equation}\label{eq:penrose_conformal_factor}
		\Omega = 2(1+(t+r)^2)^{-\frac12}(1+(t-r)^2)^{-\frac12} = \cos T + \cos R.
\end{equation}
The restriction of the Penrose transform to the initial time slice $\Set{t=0}$ is the stereographic projection from the south pole of $\SSS^d$:
\begin{equation}\label{eq:Pzero}
	\Pzero:=\left.\Pcal\right|_{t=0} \colon  \Set{t=0}\times \R^d \to \Set{T=0}\times \Tonde{ \SSS^d\setminus\Set{(-1,0, \ldots ,0)}}.
\end{equation}	 
This is also a conformal map, whose conformal factor we denote
\begin{equation}\label{eq:stereographic_conformal_factor}
		\OmegaZero=\left.\Omega\right|_{t=0}=2(1+r^2)^{-1} = 1+\cos R.
\end{equation}

We now introduce spherical harmonics. We use the notation $Y_{\ell,m}$ for normalized real-valued spherical harmonics on $\SSS^d$. Here $\ell\in \mathbb N_{\ge 0}$ denotes the degree and $m$ the degeneracy. We thus have
\begin{equation}\label{eq:spherical_harmonics}
	\begin{array}{cc}
		-\Delta_{\SSS^d} Y_{\ell,m} = \ell(\ell+d-1)Y_{\ell,m}, & m=0, \ldots, \Degeneracy(\ell):=\frac{(2\ell+d-1)(\ell+d-2)!}{\ell!(d-1)!}	-1,
		\end{array}
\end{equation}
and 
\begin{equation}\label{eq:spherical_harmonics_normalization}
	\int_{\SSS^d} Y_{\ell,m}(X)^2\, dS = 1,
\end{equation}
where $dS$ is the surface measure on $\SSS^d$. We recall that $Y_{\ell,m}(X)$ is the restriction to $\SSS^d$ of a homogeneous harmonic polynomial of degree $\ell$ in the variables $X=(X_0, X_1, \ldots, X_d)\in \R^{d+1}$. In particular, 
\begin{equation}\label{eq:spherical_harmonics_symmetry}
	Y_{\ell,m}(-X)=(-1)^\ell Y_{\ell,m}(X).
\end{equation}
For each $\ell\ge 0$, there is exactly one \emph{zonal} normalized spherical harmonic (up to an irrelevant sign); that is, one that is a function of $X_0$ only (see, for example,~\cite[§2, Lemma~1]{Muller98}). We denote it by $Y_{\ell, 0}$. For example, the spherical harmonics of degree $0$ and $1$ are
\begin{equation}\label{eq:low_deg_sph_harm}
	\begin{array}{cc}
		Y_{0,0}=\frac{1}{\sqrt{\abs{\SSS^d}}}, & Y_{1, m}(X)=\sqrt{\frac{d+1}{\abs{\SSS^d}}}X_m,\quad (m=0,1,\ldots, d).
	\end{array}
\end{equation}

We use the hat notation to denote the coefficients of expansions in spherical harmonics: if $F\in L^2(\SSS^d)$, we write
\begin{equation}\label{eq:SphHarm_Expansion}
	F(X)=\sum_{\ell=0}^\infty \sum_{m=0}^{\Degeneracy(\ell)} \Fhat(\ell, m) Y_{\ell,m}(X),
\end{equation}
We will use the fractional operators $A_1$ and $A_{-1}$ on $\SSS^d$, defined by their action on spherical harmonics:
\begin{equation}\label{eq:intertwining_ops}
	A_{\pm 1}Y_{\ell,m}:=\Tonde{-\Delta_{\SSS^d} +\Tonde{\frac{d-1}{2}}^2}^{\pm\frac12}Y_{\ell,m} = \Tonde{\ell+\frac{d-1}{2}}^{\pm 1} Y_{\ell,m}.
\end{equation}
These operators are the lifting to $\SSS^d$ of the euclidean fractional Laplacians $\mhDelta^{\pm \frac12}$ via the stereographic projection $\Pzero$, in the sense that, for any scalar field $F$ on $\SSS^d$:
\begin{equation}\label{eq:morpurgo}
	(A_{\pm 1}F)\circ \Pzero = \OmegaZero^{-\frac12(d\pm 1)}\Tonde{-\Delta}^{\pm\frac12}\Tonde{\OmegaZero^{\frac12(d\mp 1)} F \circ\Pzero};
\end{equation}
see \cite[equation (2)]{Mor02}.

The conformality of the Penrose transform $\Pcal$ implies that the substitutions 
\begin{equation}\label{eq:Penrose_field_transformation}
	\begin{array}{rcl}
		\Omega^{\frac{1-d}{2}} u &=& U\circ \Pcal \\ 
		\OmegaZero^{\frac{1-d}{2}} f_0 &=& F_0\circ\Pzero \\ 
		\OmegaZero^{\frac{-1-d}{2}}f_1 &=& F_1\circ \Pzero
	\end{array}
\end{equation}
have the property that 
\begin{equation}\label{eq:u_solves_flat_wave}
	\begin{array}{ccc}
	\begin{cases}
		u_{tt}=\Delta u,\ \ \text{on }\R^{1+d} \\ 
		\left.u\right|_{t=0} = f_0 \\ 
		\left.u_t\right|_{t=0} = f_1 \\ 
	\end{cases} 
	&\!\!\!\!\!\!\!\!\iff\!\!\!\!
	&
	\begin{cases}
	U_{TT} =\Delta_{\SSS^d} U - \Tonde{\frac{d-1}{2}}^2 U,\ \ \text{on }\Pcal(\R^{1+d}) \\ 
	\left.U\right|_{T=0}=F_0 \\ 
	\left.U_T\right|_{T=0}=F_1.
	\end{cases}
	\end{array}
\end{equation}
The expansion of $U$ in spherical harmonics reads 
\begin{equation}\label{eq:U_sph_harm}
	\begin{split}
	U(T, X)=\sum_{\ell=0}^\infty\sum_{m=0}^{\Degeneracy(\ell)}&\cos\Tonde{T(\ell+\frac{d-1}{2})}\Fhat_0(\ell,m) Y_{\ell,m}(X)  \\  
	& +\frac{\sin\Tonde{T\Tonde{\ell+\frac{d-1}{2}}}}{\ell+\frac12(d-1)}\Fhat_1(\ell,m)Y_{\ell,m}(X).
	\end{split}
\end{equation}
Actually, this formula defines a function on $\R\times \SSS^d$. The restriction of this function to $\Pcal(\R^{1+d})$ corresponds to the solution $u$ of the wave equation on $\R^{1+d}$. If $d$ is odd, $U$ is $2\pi$-periodic in $T$ and it satisfies 
\begin{equation}\label{eq:USymOddDim}
	\begin{array}{cc}
		U(T+\pi, -X)=(-1)^\frac{d-1}{2}U(T, X), & \forall (T, X)\in \SSS^1\times \SSS^d,
	\end{array}
\end{equation}
because of the sign property \eqref{eq:spherical_harmonics_symmetry} of $Y_{\ell,m}$. If $d$ is even, \eqref{eq:USymOddDim} fails.

The Strichartz extremizer \eqref{eq:extremizer_seed} can be written as follows: 
\begin{equation}\label{eq:extremizer_seed_conformal_notation}
	\fboldstar=\Tonde{\OmegaZero^{\frac{d-1}{2}}, 0}.
\end{equation}
Therefore, if $\ustar=S_t\fboldstar$ and $\Ustar$ are related by \eqref{eq:Penrose_field_transformation}, with corresponding initial data $\fboldstar$ and $(F_{\star\, 0}, F_{\star\,1})$, then we have the particularly simple expressions
\begin{equation}\label{eq:Ustar_is_cosine}
	\begin{array}{ccc}
	F_{\star\,0}=1,& F_{\star\,1}=0, &	\Ustar(T, X)=\cos \Tonde{\frac{d-1}{2}T}.
	\end{array}
\end{equation}
To facilitate forthcoming computations, we remark that $\Fhat_{\star\,0}(0,0)=\sqrt{\abs{\SSS^d}}$ and $\Fhat_{\star\,0}(\ell, m)=0$ for $\ell\ge1$. 

We now discuss integration. Letting $dS$ denote the surface measure on $\SSS^d$, if $F\colon \SSS^d\to \R$ and $V\colon \Pcal(\R^{1+d})\to \R$ one has the following change of variable formulas: 
\begin{align}\label{eq:varchange_stereographic}
		\int_{\R^d} F(\Pzero(x))\, dx &= \int_{\SSS^d} F(X)\OmegaZero^{-d}\,dS(X) \\
		\label{eq:varchange_Penrose}
		\iint_{\Pcal(\R^{1+d})} V(T, X)\, dTdS(X) & = \iint_{\R^{1+d}} V(\Pcal(t, x))\Omega^{d+1}\, dtdx
\end{align}
It is a consequence of the first formula and of equation \eqref{eq:morpurgo} that, if $\fbold, \gbold$ are related to $(F_0, F_1)$ and $(G_0, G_1)$ via \eqref{eq:Penrose_field_transformation}, then 
\begin{equation}\label{eq:scalprod_Aone}
	\Braket{\fbold|\gbold}_{\Hcaldot^\frac12}=\int_{\SSS^d} A_1 F_0 \cdot G_0\, dS +\int_{\SSS^d} A_{-1} F_1 \cdot G_1\, dS,
\end{equation}
and so
\begin{equation}\label{eq:scalprod_stereographic}
	\begin{split}
	\Braket{\fbold|\gbold}_{\Hcaldot^\frac12}=\sum_{\ell=0}^\infty\sum_{m=0}^{\Degeneracy(\ell)} &\Tonde{\ell+\frac{d-1}{2}}\Fhat_0(\ell, m)\Ghat_0(\ell, m)\\ &+\Tonde{\ell+\frac{d-1}{2}}^{-1}\Fhat_1(\ell, m)\Ghat_1(\ell, m).
	\end{split}
\end{equation}
In particular, from \eqref{eq:Ustar_is_cosine} it follows that
\begin{equation}\label{eq:fboldstar_norm}
	\norm{\fboldstar}_{\Hcaldot^\frac12(\R^d)}^2 = \frac{d-1}{2}\lvert \SSS^d\rvert.
\end{equation}
Using the symmetry \eqref{eq:USymOddDim} we can considerably simplify spacetime integrals. 
\begin{lem} 
\label{lem:sym_trick}
If $V$ is a function on $\mathbb S^1 \times \mathbb S^d$ that satisfies
\begin{equation}\label{eq:VSym}
	\begin{array}{cc}
		V(T+\pi, -X)=V(T, X), & \forall (T, X)\in \SSS^1\times \SSS^d,
	\end{array}
\end{equation}
then 
\begin{equation}\label{eq:symmetry_trick_conclusion}
	\iint_{\Pcal(\R^{1+d})} V(T, X)\, dT dS(X) = \frac12 \iint_{\SSS^1\times \SSS^d} V(T, X)\, dTdS(X).
\end{equation}
\end{lem}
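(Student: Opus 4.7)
The plan is to construct a measure-preserving involution on $\SSS^1 \times \SSS^d$ that swaps the Penrose image $\Pcal(\R^{1+d})$ with its complement and that leaves $V$ invariant, so the two halves of the integral are equal.

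First I would introduce the map $\tau : \SSS^1 \times \SSS^d \to \SSS^1 \times \SSS^d$ defined by $\tau(T, X) = (T + \pi, -X)$, where the first coordinate is taken modulo $2\pi$. This is a diffeomorphism, an involution (since $\tau \circ \tau(T, X) = (T + 2\pi, X) = (T,X)$), and preserves the product measure $dT\, dS(X)$. The hypothesis \eqref{eq:VSym} reads precisely $V \circ \tau = V$.

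The key geometric step is to verify that $\tau$ swaps $\Pcal(\R^{1+d})$ with its complement, up to a set of measure zero. Using polar coordinates $X = (\cos R, \sin R\, \omega)$ on $\SSS^d$, the description \eqref{eq:image_of_penrose} reads $\Pcal(\R^{1+d}) = \{|T| + R \le \pi\}$ (with $T$ viewed as an element of $(-\pi, \pi]$). Under $\tau$, the polar coordinate of $-X$ equals $\pi - R$, while the new time $T' \in (-\pi, \pi]$ satisfies $|T'| = \pi - |T|$ when $T \neq 0$. Therefore $|T'| + R' = (\pi - |T|) + (\pi - R) = 2\pi - (|T| + R)$, and the condition $|T| + R \le \pi$ is transformed into $|T'| + R' \ge \pi$. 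Since $\tau$ is an involution, this shows $\tau(\Pcal(\R^{1+d}))$ is the complement of $\Pcal(\R^{1+d})$ in $\SSS^1 \times \SSS^d$, with overlap only on the null set where $|T| + R = \pi$.

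Combining the three ingredients, the change of variables formula gives
\begin{equation*}
    \iint_{(\SSS^1 \times \SSS^d) \setminus \Pcal(\R^{1+d})} V(T, X)\, dT\, dS(X) = \iint_{\Pcal(\R^{1+d})} V(\tau(T,X))\, dT\, dS(X) = \iint_{\Pcal(\R^{1+d})} V(T, X)\, dT\, dS(X),
\end{equation*}
and adding the integral over $\Pcal(\R^{1+d})$ to both sides yields \eqref{eq:symmetry_trick_conclusion}. The only step that requires any care is the geometric computation showing $\tau$ exchanges the two regions, but this reduces to the elementary identity $|T'| + R' = 2\pi - (|T|+R)$ once one has written the Penrose image in the form $\{|T| + R \le \pi\}$.
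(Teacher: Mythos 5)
Your proof is correct and rests on exactly the same idea as the paper's: the involution $(T,X)\mapsto(T+\pi,-X)$ preserves the measure and $V$, and exchanges $\Pcal(\R^{1+d})$ with its complement up to a null set. The paper merely implements this in polar coordinates, splitting the $T$-integral via the substitutions $T\mapsto T\pm\pi$, $\omega\mapsto-\omega$, $R\mapsto\pi-R$, whereas you package it globally; your computation $|T'|+R'=2\pi-(|T|+R)$ is the correct verification of the key geometric step.
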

\begin{proof} 
	We use the polar coordinates \eqref{eq:sphere_polar_coordinates}, so that 
	\begin{equation}\label{eq:sphere_metric_tensor}
		dS = (\sin R)^{d-1}\, dR\, dS^{d-1}
	\end{equation}
	where $dS^{d-1}$ denotes the volume element on $\SSS^{d-1}$; see \cite[§1.42]{Muller98}. Setting 
	\begin{equation}\label{eq:G_integral}
		G(R)=\int_{-\pi+R}^{\pi-R}\Tonde{ \int_{\SSS^{d-1}}V(T, \cos R, \sin R\, \omega)\,dS^{d-1}(\omega)}\,dT, 
	\end{equation}
	the integral to evaluate can be rewritten as 
	\begin{equation}\label{eq:symmetry_trick_step_one}
		\iint_{\Pcal(\R^{1+d})} V(T, X) \, dT dS(X) = \int_0^\pi (\sin R)^{d-1}\frac12\Tonde{ G(R) + G(\pi-R)}\, dR
	\end{equation}
	Using the changes of variable $\omega\mapsto -\omega$ and $T\mapsto T \pm \pi$, 
	\begin{equation}\label{eq:G_pi_minus_R}
		\begin{split}
		G(\pi-R)=&\int_{-\pi}^{-\pi+R}\Tonde{ \int_{\SSS^{d-1}}V(T-\pi, -\cos R, -\sin R\, \omega) \, dS^{d-1}}\, dT  \\ 
		& +\int_{\pi-R}^\pi\Tonde{ \int_{\SSS^{d-1}} V(T+\pi, -\cos R, -\sin R\, \omega)\,dS^{d-1}}\, dT.
		\end{split}
	\end{equation}
	Inserting~\eqref{eq:G_pi_minus_R} into~\eqref{eq:symmetry_trick_step_one} and using the assumption \eqref{eq:USymOddDim}, we obtain \eqref{eq:symmetry_trick_conclusion}.
\end{proof}
\begin{cor}\label{cor:sym_trick}
	Let $d$ be an odd integer. If $u_{tt}=\Delta u$ and $w_{tt}=\Delta w$ on $\R^{1+d}$, and if $U, W$ are related to $u, v$ via the Penrose transform \eqref{eq:Penrose_field_transformation}, then
	\begin{equation}\label{eq:wave_sym_trick}
		\iint_{\R^{1+d}} \abs{u}^a \abs{w}^b\, dtdx = \frac12 \iint_{\SSS^1\times \SSS^d} |\Omega|^{\frac{d-1}{2}(a+b)-(d+1)} |U|^a|W|^b\, dTdS,
	\end{equation}
	and
	\begin{equation}\label{eq:wave_sym_trick_alt}
		\iint_{\R^{1+d}} \abs{u}^{a-1} u\, w\, dtdx = \frac12 \iint_{\SSS^1\times \SSS^d} |\Omega|^{\frac{d-1}{2}(a+1)-(d+1)} |U|^{a-1}U\, W\, dTdS,
	\end{equation}
	 for all $a,b\in\R$. Here $\Omega(T, X)=\cos T + X_0$, where $X=(X_0, \Xvec)\in \SSS^d$.
\end{cor}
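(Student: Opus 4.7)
The plan is to pull the flat spacetime integrals through the Penrose transformation and then use Lemma \ref{lem:sym_trick} to extend the integration from $\Pcal(\R^{1+d})$ to the full Lorentzian cylinder $\SSS^1\times\SSS^d$, where the right-hand sides are set. The key inputs are the conformal transformation rule \eqref{eq:Penrose_field_transformation}, the change of variables \eqref{eq:varchange_Penrose}, and the symmetry \eqref{eq:USymOddDim} available in odd dimensions.

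First I would rewrite the integrand. Inverting \eqref{eq:Penrose_field_transformation} gives $u=\Omega^{\frac{d-1}{2}}(U\circ \Pcal)$ and $w=\Omega^{\frac{d-1}{2}}(W\circ \Pcal)$ on $\R^{1+d}$, so
\begin{equation*}
\abs{u}^a\abs{w}^b=\Omega^{\frac{d-1}{2}(a+b)}\bigl(\abs{U}^a\abs{W}^b\bigr)\circ \Pcal,
\end{equation*}
and similarly $\abs{u}^{a-1}u\,w=\Omega^{\frac{d-1}{2}(a+1)}(\abs{U}^{a-1}U\,W)\circ\Pcal$. Applying \eqref{eq:varchange_Penrose} (with the roles of the two sides exchanged, which is legitimate since $\Omega>0$ on $\Pcal(\R^{1+d})$ and $\Pcal$ is a diffeomorphism onto its image) absorbs a factor $\Omega^{-(d+1)}$, yielding
\begin{equation*}
\iint_{\R^{1+d}} \abs{u}^a\abs{w}^b\,dtdx=\iint_{\Pcal(\R^{1+d})} \Omega^{\frac{d-1}{2}(a+b)-(d+1)}\abs{U}^a\abs{W}^b\,dTdS,
\end{equation*}
and analogously for the alternative version. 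Since $\Omega>0$ on $\Pcal(\R^{1+d})$, I may harmlessly replace $\Omega$ by $\abs{\Omega}$ inside that integral; this is what allows the integrand to be interpreted globally on $\SSS^1\times\SSS^d$.

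Next I would check that the resulting integrand $V(T,X)$ satisfies the antipodal--time-shift symmetry \eqref{eq:VSym}. Using $\Omega(T,X)=\cos T + X_0$ gives $\Omega(T+\pi,-X)=-\Omega(T,X)$, so $\abs{\Omega}$ is invariant. By \eqref{eq:USymOddDim}, $U(T+\pi,-X)=(-1)^{\frac{d-1}{2}}U(T,X)$ and likewise for $W$; hence $\abs{U}^a\abs{W}^b$ is invariant, which handles \eqref{eq:wave_sym_trick}. For \eqref{eq:wave_sym_trick_alt} the sign $(-1)^{\frac{d-1}{2}}$ appears twice---once from $\abs{U}^{a-1}U$ and once from $W$---and its square is $(-1)^{d-1}=1$ because $d$ is odd, so the integrand is again invariant.

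With the hypothesis of Lemma \ref{lem:sym_trick} satisfied, I apply it to obtain the factor of $1/2$ and the full integration domain $\SSS^1\times\SSS^d$, which yields \eqref{eq:wave_sym_trick} and \eqref{eq:wave_sym_trick_alt}. I expect no serious obstacle: the only point requiring care is the bookkeeping with $\abs{\Omega}$ versus $\Omega$, together with the parity argument that uses precisely the oddness of $d$ (and would fail in even dimensions, consistent with the remark after \eqref{eq:USymOddDim}).
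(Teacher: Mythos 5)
Your proposal is correct and follows essentially the same route as the paper: pull the integral back to $\Pcal(\R^{1+d})$ via the conformal change of variables \eqref{eq:varchange_Penrose}, verify that the resulting integrand satisfies \eqref{eq:VSym} using \eqref{eq:USymOddDim} and $\Omega(T+\pi,-X)=-\Omega(T,X)$, and invoke Lemma \ref{lem:sym_trick}. The paper states these steps more tersely, but your explicit bookkeeping of $\Omega$ versus $\abs{\Omega}$ and of the signs in the non-absolute-value case \eqref{eq:wave_sym_trick_alt} is exactly the content of its "immediate consequence" remark.
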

\begin{proof} To prove \eqref{eq:wave_sym_trick}, we need to check that 
\begin{equation}\label{eq:V_sym_trick_proof}
	V(T, X)= |\Omega|^{\frac{d-1}{2}(a+b)-(d+1)} |U|^a|W|^b
\end{equation}
satisfies the property \eqref{eq:VSym}, which is an immediate consequence of the symmetry property \eqref{eq:USymOddDim} of $U$ and $W$. We remark that these symmetry properties need not hold for even $d$. The proof of \eqref{eq:wave_sym_trick_alt} is analogous.
\end{proof}

We end the section with the  computation of the tangent space $T_{\fboldstar}\Mrom$, defined in \eqref{eq:tangent_space_Mrom}, where we recall that $\fboldstar$ is the extremizer given in \eqref{eq:extremizer_seed_conformal_notation}. We systematically use the following identification of $x\in \R^d$ with $X\in \SSS^d$ via the stereographic projection $\Pzero$:
\begin{equation}\label{eq:stereographic_projection}
	\begin{array}{ccc}
		 \OmegaZero-1=X_0, & x_j\OmegaZero = X_j, & j=1,\ldots, d.
	\end{array}
\end{equation}
In the following equations, the first computation is performed by applying \eqref{eq:stereographic_projection}, the second by applying \eqref{eq:morpurgo} once, and the last by applying \eqref{eq:morpurgo} twice:
\begin{align}\notag 
   \frac \partial{\partial x_j}\Tonde{ \OmegaZero^\frac{d-1}{2} } &= -\frac{d-1}{2}x_j\OmegaZero^{\frac{d+1}{2}}=-\frac{d-1}{2}X_j\OmegaZero^{\frac{d-1}{2}},\\
    \label{eq:OmegaFormulasGeneralDim}\mhDelta^\frac12\OmegaZero^{\frac{d-1}{2}} &= \frac{d-1}{2}\OmegaZero^{\frac{d+1}{2}} ,\\ 
    \notag-\Delta \OmegaZero^{\frac{d-1}{2}} &= \frac{d-1}{2}\OmegaZero^{\frac{d+1}{2}}\Tonde{\frac{d-1}{2} + \frac{d+1}{2}X_0}   .
 \end{align}
From \eqref{eq:OmegaFormulasGeneralDim} and \eqref{eq:stereographic_projection}, using $\sum_{j=1}^d X_j^2=1-X_0^2$ we infer 
\begin{equation}\label{eq:OmegaFormulasCont}
	x\cdot \nabla \Tonde{ \OmegaZero^\frac{d-1}{2}} = -\frac{d-1}{2}(1-X_0^2)\OmegaZero^{\frac{d-3}{2}} = -\frac{d-1}{2}(1-X_0)\OmegaZero^\frac{d-1}{2}.
\end{equation}
We apply the generators of the symmetry group, listed in Table~\ref{tab:generator_table}, to the Strichartz extremizer $\fboldstar$ given in \eqref{eq:extremizer_seed_conformal_notation}. Using the computations \eqref{eq:OmegaFormulasGeneralDim}, we obtain Table~\ref{tab:symmetry_table}; we recall that we are identifying $x\in \R^d$ with $X\in \SSS^d$ via the stereographic projection \eqref{eq:stereographic_projection}.
\begin{table}[!]
\begin{equation}\label{eq:symmetry_table}
	\begin{array}{c|c|c} 
		&\text{Generator} & \text{Applied to }\fboldstar=\Tonde{\OmegaZero^{\frac{d-1}{2}}, 0} \\
		\toprule \\ 
		1& \begin{bmatrix} 1 &0 \\0 & 1 \end{bmatrix}  & \begin{bmatrix}\OmegaZero^{\frac{d-1}{2}} \\ 0 \end{bmatrix}  \\ 
	2 & \begin{bmatrix}0&  1 \\ \Delta & 0\end{bmatrix}& \begin{bmatrix} 0 \\ -\frac{d-1}{2}\OmegaZero^{\frac{d+1}{2}}\Tonde{ \frac{d-1}{2} + \frac{d+1}{2}X_0}  \end{bmatrix}\\ 
	3 & \begin{bmatrix} 0& \mhDelta^{-\frac12} \\ -\mhDelta^\frac12 &0 \end{bmatrix}& \begin{bmatrix} 0 \\ -\frac{d-1}2 \OmegaZero^{\frac{d+1}{2}}\end{bmatrix} \\ 
	4& \begin{bmatrix} 0& x_j \\ x_j \Delta + \frac{\partial}{\partial{x_j}} & 0\end{bmatrix} & \begin{bmatrix} 0 \\ -\frac{d-1}{2} \OmegaZero^{\frac{d-1}{2}}\Tonde{ \frac{(d-1)(d+1)}{4} + \frac{d+1}{2}X_0 }X_j  \end{bmatrix} \\
		5& \begin{bmatrix} \frac{d-1}{2}+ x\cdot \nabla &0 \\ 0& \frac{d+1}{2}+x\cdot \nabla \end{bmatrix} & \begin{bmatrix}  \frac{d-1}{2}X_0\OmegaZero^{\frac{d-1}{2}} \\ 0 \end{bmatrix} \\ 
	6&\begin{bmatrix} \frac{\partial}{\partial x_j}&0 \\ 0& \frac{\partial}{\partial x_j} \end{bmatrix} & \begin{bmatrix} -\frac{d-1}{2} X_j \OmegaZero^{\frac{d-1}{2}} \\ 0\end{bmatrix}\quad (j=1\ldots d).
	\\ 
	\end{array}
\end{equation}
\caption{A basis of the tangent space at $\fboldstar$ in arbitrary dimension.}
\label{tab:symmetry_table}
\end{table}
Since $\OmegaZero=1+X_0$, when $d=3$ the fourth line of Table~\ref{tab:symmetry_table} simplifies:  
\begin{equation}\label{eq:OmegaThreeMiracle}
	\OmegaZero^{\frac{d-1}{2}}\Tonde{ \frac{(d-1)(d+1)}{4}+ \frac{d+1}{2}X_0 }X_j=2\OmegaZero^2X_j.
\end{equation} 
So, specializing Table~\ref{tab:symmetry_table} to the case $d=3$, we conclude that 
\begin{equation}\label{eq:tangent_space_dim_three}
	T_{\fboldstar}\Mrom = \Set{ \begin{bmatrix} \OmegaZero P(X) \\ \OmegaZero^2 Q(X)\end{bmatrix} : P, Q\text{ polynomials of degree}\le 1\text{ in }X\in \SSS^3}.
\end{equation}
Since the restrictions of these polynomials to the sphere are spherical harmonics of degree $0$ and $1$, after applying the Penrose transform \eqref{eq:Penrose_field_transformation} we see that 
\begin{equation}\label{eq:belong_to_tangent}
	\begin{array}{ccl}
		\fbold\in T_{\fboldstar} \Mrom & \iff & \Fhat_0(\ell, m)=\Fhat_1(\ell, m)=0, \quad \ell\ge2.
	\end{array}
\end{equation}
 In light of the identity \eqref{eq:scalprod_stereographic}, expressing the $\PhaseSpace(\R^3)$ scalar product in terms of $F_0, F_1$, we characterize the orthogonal complement of $T_\fboldstar \Mrom$ as follows:
\begin{equation}\label{eq:ortho_to_tangent}
	\begin{array}{ccl}
		\fbold\bot T_{\fboldstar}\Mrom & \iff & \Fhat_0(\ell,m)=\Fhat_1(\ell,m)=0,\quad \ell=0, 1.
	\end{array}
\end{equation}

\section{Proof of Theorem~\ref{put}}
Here we consider the functional 
\begin{equation}\label{eq:DefFunct}
	\begin{array}{cc}
		\psi(\fbold):=\mathcal{A}_d^p \norm{\fbold}_\PhaseSpace^p - \norm{S_t \fbold}_{L^p(\R^{1+d})}^p,& p:=2\frac{d+1}{d-1},
	\end{array}
\end{equation}
where $\mathcal{A}_d=\norm{S_t\fboldstar}_{L^p(\R^{1+d})}/\norm{\fboldstar}_\PhaseSpace$ and $\fboldstar$ is the pair of initial data defined in~\eqref{eq:extremizer_seed}. Note that both $\mathcal{A}_d$ and $\fboldstar$ depend on the dimension $d\ge 2$. Theorem~\ref{put} can be recast as follows. 
\begin{thm}\label{thm:put}
	It holds that 
	\begin{equation}\label{eq:put}
		\begin{array}{cc}
			\left.\frac{d}{d\eps}\psi(\fboldstar + \eps \fbold)\right|_{\eps=0} = 0 , & \forall\fbold \in \PhaseSpace(\R^d),
		\end{array}
	\end{equation}
	if and only if $d$ is odd.
\end{thm}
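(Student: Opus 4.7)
My plan is to compute the first variation of $\psi$ explicitly and, through the Penrose transformation, convert the $L^p$-term into a spherical-harmonic sum on $\Pcal(\R^{1+d})$ whose structure depends sharply on the parity of $k=(d-1)/2$.

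Differentiating $\psi(\fboldstar+\eps\fbold)$ at $\eps=0$ produces the linear functional
$$L(\fbold)=p\Scal^p\norm{\fboldstar}_\PhaseSpace^{p-2}\,\Braket{\fboldstar|\fbold}_\PhaseSpace-p\int_{\R^{1+d}}|\ustar|^{p-2}\ustar\,u\,dt\,dx,$$
and the identity $L(\fboldstar)=0$ is automatic from the definition of $\Scal$. I then apply the Penrose transformation: since $\tfrac{d-1}{2}p-(d+1)=0$, the conformal weight in the pullback cancels, and by \eqref{eq:Ustar_is_cosine} the integral takes the simple form
$$\int_{\Pcal(\R^{1+d})}|\cos(kT)|^{p-2}\cos(kT)\,U(T,X)\,dT\,dS(X).$$
Expanding $U$ via \eqref{eq:U_sph_harm} and observing that $\Pcal(\R^{1+d})$ is rotationally symmetric about the $X_0$-axis, the factorization \eqref{eq:SphHarmBuilder} forces all non-zonal contributions to vanish upon integration over $\SSS^{d-1}$, leaving only the zonal harmonics $Y_{\ell,\obold}^d$. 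This reduces $L(\fbold)$ to a countable sum of one-variable $T$-integrals paired with the zonal coefficients $\Fhat_0(\ell,\obold)$ and $\Fhat_1(\ell,\obold)$.

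For odd $d$, $k$ is a positive integer and \eqref{eq:USymOddDim} is available, so the integrand $|\cos(kT)|^{p-2}\cos(kT)\,U(T,X)$ satisfies the symmetry hypothesis \eqref{eq:VSym} of Lemma~\ref{lem:sym_trick}, which lets me extend the region of integration from $\Pcal(\R^{1+d})$ to all of $\SSS^1\times\SSS^d$. After this extension the $X$-integration annihilates every zonal mode of degree $\ell\geq 1$, so only the $\ell=0$ terms survive. The $\Fhat_1(0,\obold)$ contribution then vanishes by the odd $T$-symmetry of $|\cos(kT)|^{p-2}\cos(kT)\sin(kT)$, while the $\Fhat_0(0,\obold)$ coefficient is matched by the $\Braket{\fboldstar|\fbold}_\PhaseSpace$ term---this matching being already forced by $L(\fboldstar)=0$. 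Hence $L\equiv 0$ in odd dimensions.

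For even $d$, the half-integer $k$ breaks the symmetry required by Lemma~\ref{lem:sym_trick}, so the $\ell\geq 1$ contributions need not cancel. I aim to exhibit a direction $\fbold=(Y_{\ell,\obold}^d,0)$ with $\ell\geq 1$, which is orthogonal to $\fboldstar$ in $\PhaseSpace$; then $L(\fbold)$ reduces, up to a positive constant, to
$$\int_{-\pi}^\pi|\cos(kT)|^{p-2}\cos(kT)\cos((\ell+k)T)\int_0^{\pi-|T|}A_\ell^0(d+1;\cos R)(\sin R)^{d-1}\,dR\,dT.$$
The main obstacle is choosing $\ell$ so the integral does not vanish accidentally: $\ell=1$ fails when $d=2$ because its inner cap integral is proportional to $\sin^d|T|$, which is too symmetric to survive the $T$-mean. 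The next candidate $\ell=2$ succeeds for $d=2$ by an explicit Beta-function computation, and an analogous calculation is expected to work for every even $d\geq 2$ (possibly with $\ell$ growing with $d$), since the failure of antipodal symmetry generically prevents the exact cancellations operative in the odd case.
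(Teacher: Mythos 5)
Your overall route is the same as the paper's: compute the first variation, pass to the Penrose picture where $S_t\fboldstar$ becomes $\cos(\tfrac{d-1}{2}T)$ and the conformal weight cancels, use the antipodal symmetry of Lemma~\ref{lem:sym_trick} to dispose of odd $d$, and test against the zonal degree-$2$ harmonic to disprove criticality for even $d$. Your odd-dimensional argument is complete and essentially identical to the paper's; the observation that only zonal modes survive the $\SSS^{d-1}$-integration over the rotationally invariant region $\Pcal(\R^{1+d})$ is a clean way to organize it, although the paper instead first splits off the component along $\fboldstar$ and then only needs to control the $\ell=0$ mode of $U_\bot$.

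The genuine gap is the even case for $d\ge 4$. After the Rodrigues-formula evaluation of the cap integral (which gives $C_d\cos T\,(\sin T)^d$), everything reduces to showing that
\[
\int_{-\pi}^{\pi}\Big|\cos\tfrac{d-1}{2}T\Big|^{p-2}\cos\tfrac{d-1}{2}T\,\cos\tfrac{d+3}{2}T\,\cos T\,(\sin T)^{d}\,dT\neq 0 ,
\]
and you assert this on the grounds that the broken antipodal symmetry ``generically prevents'' cancellation. That is not a proof, and the non-vanishing here is exactly the hard content of the theorem. The paper must: expand $|\cos\tfrac{d-1}{2}T|^{p-2}$ via the binomial series to show its cosine spectrum is supported on frequencies $m(d-1)$ (Lemma~\ref{lem:FractFourierSeries}); compute the four relevant Fourier coefficients of the trigonometric polynomial factor explicitly, one of which (at frequency $3(d-1)$) turns out to be exactly zero; and then establish the two sign inequalities \eqref{eq:sign_relations}, the second of which needs a comparison of $\int_0^{\pi/6}$ against $\int_{\pi/6}^{\pi/2}$ of $|\cos T|^{p-2}\cos T\cos 3T$ exploiting the monotonicity of $|\cos T|^{p-2}$. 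The fact that one candidate Fourier term vanishes identically and that the final answer carries the alternating sign $(-1)^{d/2}$ shows that accidental cancellation is a live possibility which must be excluded by computation, not by genericity. As written, your argument establishes the failure of criticality only for $d=2$.
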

\begin{lem}\label{lem:Penrose_first_order}
	Writing $\fbold=c\fboldstar + \fboldbot$, with $\Braket{\fboldbot  | \fboldstar}_\PhaseSpace= 0$, then 
	\begin{equation}\label{eq:Penrose_first_order}
		\left.\frac{d}{d\eps}\psi(\fboldstar+\eps\fbold)\right|_{\eps=0}=-p\iint_{\R^{1+d}}\abs{S_t \fboldstar}^{p-2}S_t\fboldstar S_t\fboldbot\, dtdx.
	\end{equation}
\end{lem}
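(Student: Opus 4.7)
The plan is to simply differentiate the two pieces of $\psi$ separately, invoke the identity $\Scal^p \norm{\fboldstar}_\PhaseSpace^p = \norm{S_t\fboldstar}_{L^p}^p$ that follows from $\fboldstar$ being a maximizer, and then exploit the decomposition $\fbold = c\fboldstar + \fboldbot$ together with the orthogonality hypothesis.

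More precisely, I would first differentiate the Sobolev term. Since $\eps \mapsto \norm{\fboldstar + \eps\fbold}_\PhaseSpace^2$ is a quadratic polynomial in $\eps$, the chain rule gives
\begin{equation*}
	\left.\frac{d}{d\eps}\Scal^p\norm{\fboldstar+\eps\fbold}_\PhaseSpace^p\right|_{\eps=0} = p\Scal^p\norm{\fboldstar}_\PhaseSpace^{p-2}\Braket{\fboldstar|\fbold}_\PhaseSpace = pc\Scal^p\norm{\fboldstar}_\PhaseSpace^p,
\end{equation*}
where in the last step I used the decomposition $\fbold = c\fboldstar + \fboldbot$ and the orthogonality $\Braket{\fboldbot|\fboldstar}_\PhaseSpace=0$.

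Next I would differentiate the space-time term. Since $p>1$, the map $g\mapsto \norm{g}_{L^p}^p$ is Fréchet differentiable on $L^p(\R^{1+d})$ (this is the same regularity exploited in the proof of Proposition~\ref{prop:up_bound}), and $S_t$ is continuous from $\PhaseSpace$ to $L^p$ by Strichartz. Thus
\begin{equation*}
	\left.\frac{d}{d\eps}\norm{S_t(\fboldstar+\eps\fbold)}_{L^p}^p\right|_{\eps=0} = p\iint_{\R^{1+d}} \abs{S_t\fboldstar}^{p-2}S_t\fboldstar \cdot S_t\fbold\,dtdx.
\end{equation*}
Splitting $S_t\fbold = cS_t\fboldstar + S_t\fboldbot$ inside the integral, the first piece contributes $pc\norm{S_t\fboldstar}_{L^p}^p$, while the second piece contributes the integral appearing on the right-hand side of~\eqref{eq:Penrose_first_order}.

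Subtracting the two derivatives and using the maximizer identity $\Scal^p\norm{\fboldstar}_\PhaseSpace^p = \norm{S_t\fboldstar}_{L^p}^p$, the two $pc$-prefactored terms cancel exactly, leaving
\begin{equation*}
	\left.\frac{d}{d\eps}\psi(\fboldstar+\eps\fbold)\right|_{\eps=0} = -p\iint_{\R^{1+d}}\abs{S_t\fboldstar}^{p-2}S_t\fboldstar\,S_t\fboldbot\,dtdx,
\end{equation*}
which is exactly~\eqref{eq:Penrose_first_order}. There is no real obstacle here; the only point that deserves a line of justification is the Fréchet differentiability of the $L^p$-norm-to-the-$p$-th-power, and this is already cited in the earlier proof of Proposition~\ref{prop:up_bound} via~\cite[Theorem~2.6]{LieLos01}.
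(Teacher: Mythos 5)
Your proposal is correct and follows essentially the same route as the paper: the paper likewise computes $\left.\frac{d}{d\eps}\psi(\fboldstar+\eps\fbold)\right|_{\eps=0} = p\Scal^p\Braket{\fboldstar|\fbold}_\PhaseSpace\norm{\fboldstar}_\PhaseSpace^{p-2}-p\iint\abs{S_t\fboldstar}^{p-2}S_t\fboldstar S_t\fbold\,dtdx$ for general $\fbold$ and then substitutes $\fbold=c\fboldstar+\fboldbot$ and the definition of $\Scal$ to cancel the $pc$-terms. Your extra remark about Fr\'echet differentiability of the $L^p$ norm is a reasonable justification that the paper leaves implicit.
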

\begin{proof}
	This follows from the computation
	\begin{equation}\label{eq:Penrose_first_order_comp}
		\left.\frac{d}{d\eps}\psi(\fboldstar+\eps\fbold)\right|_{\eps=0}\!\!\! = p\mathcal{A}_d^p\Braket{\fboldstar|\fbold}_\PhaseSpace \norm{\fboldstar}_\PhaseSpace^{p-2}-p\iint_{\R^{1+d}}\abs{S_t\fboldstar}^{p-2}S_t\fboldstar S_t \fbold\, dtdx,
	\end{equation}
	which holds for any $\fbold\in\PhaseSpace(\R^d)$, and then taking  $\fbold=c\fboldstar + \fboldbot$ and recalling the definition of $\mathcal{A}_d$.
	\end{proof}
When $d$ is odd, using Corollary \ref{cor:sym_trick} and \eqref{eq:Ustar_is_cosine} we can rewrite the integral on the right-hand side of \eqref{eq:Penrose_first_order} as follows: 
\begin{equation}\label{eq:Penrose_first_order_odd}
	\iint_{\R^{1+d}}\abs{S_t \fboldstar}^{p-2}S_t\fboldstar S_t\fboldbot\, dtdx = \frac12 \iint_{\SSS^1\times \SSS^d} \abs{\cos \frac{d-1}{2}T}^{p-2}\!\!\!\!\cos\left(\frac{d-1}{2}T\right) U_\bot\, dTdS,
\end{equation}
where $u_\bot=S_t\fboldbot$ and $U_\bot$ are related by the Penrose transform \eqref{eq:Penrose_field_transformation}. From the formula \eqref{eq:scalprod_stereographic} we infer that the condition $\Braket{\fboldstar|\fboldbot}_\PhaseSpace\!=\!0$ is equivalent to $\Fhat_{\bot\, 0}(0,0)=0$. Therefore, expanding $U_\bot$ in spherical harmonics as in \eqref{eq:U_sph_harm}, we see that $U_\bot(T, \cdot)$ satisfies
\begin{equation}\label{eq:Ubot_ortho}
	\begin{array}{cc}\Ds
	\int_{\SSS^d} U_\bot(T, X)\, dS(X)=C\sin\Tonde{\frac{d-1}{2}T} \Fhat_1(0,0), & \forall\,T\in [-\pi, \pi],
	\end{array}
\end{equation}
 for some constant $C$. This implies that
 \begin{multline}\label{eq:Penrose_first_order_odd_wip}
 	\frac12 \iint_{\SSS^1\times \SSS^d} \abs{\cos \frac{d-1}{2}T}^{p-2}\cos\left(\frac{d-1}{2}T\right) U_\bot\, dTdS \\ = \frac{C}{2}\Fhat_1(0,0)\int_{-\pi}^\pi\abs{\cos \frac{d-1}{2}T}^{p-2}\cos\left(\frac{d-1}{2}T\right)\sin\left(\frac{d-1}{2}T\right)\, dT=0,
\end{multline}
as the last integrand is odd. This completes the proof of Theorem \ref{thm:put} in the odd dimensional case.

The reason why this argument fails in even dimension is that Corollary~\ref{cor:sym_trick} is not applicable in that case. In order to prove that, in fact, $\fboldstar$ is not a critical point in even dimension, we need only prove that the derivative is nonzero in a single direction. A bad choice would be to take the direction $\fbold=(f_0, 0)$, where $f_0$ corresponds to a spherical harmonic of degree $1$ under the Penrose transform \eqref{eq:Penrose_field_transformation}, as then we would be moving in the direction of the symmetries of the inequality; see entries 5 and 6 in Table~\ref{tab:symmetry_table}. Instead we consider a zonal spherical harmonic of degree $2$, which we denote by $Y_{2, 0}$ in agreement with the notation of Section 3.
\begin{lem}\label{lem:critpoint_failure}
	Let $d\ge 2$ be even and let $\fbold=(f_0, 0)\in \PhaseSpace(\R^d)$ be the initial data corresponding to 
	\begin{equation}\label{eq:Second_SphHarm}
		\begin{array}{cc}
			F_0= Y_{2, 0}, & F_1=0,
		\end{array}
	\end{equation}
	via the Penrose transform \eqref{eq:Penrose_field_transformation}. Then 
	\begin{equation}\label{eq:critpoint_failure}
		\begin{array}{ccc}
		\Ds \left.\frac{d}{d\eps}\psi(\fboldstar+\eps\fbold)\right|_{\eps=0}=(-1)^{\frac{d}{2}+1}	\, c_d, &\text{where} & c_d>0.
		\end{array}
	\end{equation}
\end{lem}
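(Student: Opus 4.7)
The plan is to apply Lemma~\ref{lem:Penrose_first_order}, transfer the resulting spacetime integral to $\Pcal(\R^{1+d})$ via the Penrose transformation, and then use zonality of $Y_{2,\obold}$ together with a trigonometric identity specific to even $d$. Under the Penrose map~\eqref{eq:Penrose_field_transformation}, $\fboldstar$ corresponds to $(1,0)$ while $\fbold$ corresponds to $(Y_{2,\obold},0)$, so the scalar product formula~\eqref{eq:scalprod_stereographic} combined with orthogonality of spherical harmonics of distinct degree yields $\Braket{\fbold|\fboldstar}_{\PhaseSpace}=0$. Hence in Lemma~\ref{lem:Penrose_first_order} we may take $c=0$ and $\fboldbot=\fbold$, reducing matters to determining the sign and size of $\mathcal{I}:=\iint_{\R^{1+d}}|S_t\fboldstar|^{p-2}\,S_t\fboldstar\,S_t\fbold\,dtdx$.

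Pushing $\mathcal{I}$ forward via~\eqref{eq:Penrose_field_transformation}--\eqref{eq:varchange_Penrose}, the conformal factor $\Omega$ enters with exponent $(d-1)p/2-(d+1)=0$, which vanishes precisely because $p$ is the conformally critical exponent; so $\Omega$ drops out entirely. Using~\eqref{eq:Ustar_is_cosine} and applying~\eqref{eq:U_sph_harm} to $(Y_{2,\obold},0)$ gives
\begin{equation*}
\mathcal{I}=\iint_{\Pcal(\R^{1+d})}\Bigl|\cos\bigl(\tfrac{d-1}{2}T\bigr)\Bigr|^{p-2}\cos\bigl(\tfrac{d-1}{2}T\bigr)\cos\bigl(\tfrac{d+3}{2}T\bigr)\,Y_{2,\obold}(X)\,dT\,dS(X).
\end{equation*}
The same integrand over all of $\SSS^1\times\SSS^d$ factorizes into a $T$-integral times $\int_{\SSS^d}Y_{2,\obold}\,dS=0$, so $\mathcal{I}$ equals minus the integral over the complement $\{|T|>\pi-R\}$. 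On this complement I would substitute $T'=\pi-|T|$ and use the identity $\cos\bigl(\tfrac{d-1}{2}(\pi-T')\bigr)=(-1)^{d/2+1}\sin\bigl(\tfrac{d-1}{2}T'\bigr)$, valid for even $d$ because $(d-1)\pi/2$ is then an odd multiple of $\pi/2$ (with an analogous identity for $(d+3)/2$), to convert cosines into sines. This is precisely the identity that fails for odd $d$, where instead Corollary~\ref{cor:sym_trick} forces $\mathcal{I}=0$, and it is the mechanism producing the sign $(-1)^{d/2+1}$ in the final answer.

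After zonal reduction via polar coordinates on $\SSS^d$ (the $\SSS^{d-1}$ integration produces $|\SSS^{d-1}|$), and swapping the order of integration in $(T',R)$, the integral becomes one-dimensional, involving the primitive $\int_0^{T'}(\sin R)^{d-1}Y_{2,\obold}(\cos R)\,dR$, which vanishes at $T'=0,\pi$ by the same orthogonality as above. The main obstacle I anticipate is establishing \emph{strict} positivity of the constant $c_d$, rather than just isolating the sign: the remaining integrand mixes $|\sin(\tfrac{d-1}{2}T')|^p$, trigonometric oscillations, and a quadratic in $\cos R$, so while each piece is evaluable via Beta function identities, ruling out accidental cancellation requires careful bookkeeping. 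A useful reformulation is the decomposition $g(T')=\phi\cos 2T'+\tfrac{1}{d+1}\sin 2T'\,\phi'$ where $\phi=|\sin(\tfrac{d-1}{2}T')|^p$, which follows from $p(d-1)=2(d+1)$ and provides an explicit antiderivative for the $T$-factor; plugging this in should render the sign analysis tractable.
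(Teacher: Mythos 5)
Your setup is sound and matches the paper's: the orthogonality $\Braket{\fbold|\fboldstar}_{\PhaseSpace}=0$ does follow from \eqref{eq:scalprod_stereographic}, the conformal weight does vanish since $\frac{d-1}{2}p=(d+1)$, and your complement trick (integral over $\Pcal(\R^{1+d})$ equals minus the integral over $\{R>\pi-|T|\}$ because $\int_{\SSS^d}Y_{2,\obold}\,dS=0$) is a legitimate alternative to the paper's direct evaluation of $\int_0^{\pi-|T|}Y_{2,\obold}(\cos R)(\sin R)^{d-1}dR=C_d\cos T(\sin T)^d$ via the Rodrigues formula; both reduce the problem to the same one-dimensional integral $\int_{-\pi}^{\pi}|\cos\tfrac{d-1}{2}T|^{p-2}\cos\tfrac{d-1}{2}T\,\cos\tfrac{d+3}{2}T\,\cos T(\sin T)^d\,dT$ up to a positive factor.

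The genuine gap is that you stop exactly where the lemma's content begins: you never show $c_d>0$, and you say so yourself (``ruling out accidental cancellation requires careful bookkeeping,'' ``should render the sign analysis tractable''). In the paper this is the bulk of the proof: an explicit evaluation for $d=2$, and for $d\ge4$ a Parseval argument resting on Lemma~\ref{lem:FractFourierSeries} (the Fourier support of $|\cos\tfrac{d-1}{2}T|^{p-2}$ lies in multiples of $d-1$, proved via the binomial series for $(1+v)^{(p-2)/2}$), the explicit Fourier coefficients \eqref{eq:hatPzero}--\eqref{eq:hatPtwodminusone} of the trigonometric polynomial $P_d$, and then the two sign inequalities \eqref{eq:sign_relations}, the second of which needs a genuinely delicate argument comparing $\int_0^{\pi/6}$ with $\int_{\pi/6}^{\pi/2}$ of $|\cos T|^{p-2}\cos T\cos 3T$ using the monotonicity of $|\cos T|^{p-2}$. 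Your proposed antiderivative decomposition is only a suggestion, not a proof, and nonzeroness of this kind of oscillatory integral with a fractional weight is not something one can wave at. A secondary concern: your claimed mechanism for the sign is suspect. Under $T\mapsto\pi-T'$ \emph{both} cosine factors $\cos\tfrac{d-1}{2}T$ and $\cos\tfrac{d+3}{2}T$ pick up the factor $(-1)^{d/2+1}$, so these cancel in the product; in the paper the sign $(-1)^{d/2}$ actually emerges from the coefficient of the top-frequency harmonic $\cos(dT)$ in the expansion of $(\sin T)^d$, i.e.\ from \eqref{eq:hatPzero}. So even the sign claim is not yet justified by your argument as written.
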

\begin{proof}
Applying the Penrose transform to \eqref{eq:Penrose_first_order} we obtain 
\begin{equation}\label{eq:critpoint_failure_wip}
	\left.\frac{d}{d\eps} \psi(\fboldstar + \eps \fbold)\right|_{\eps=0} = -p\iint_{\Pcal(\R^{1+d})} \abs{\cos \frac{d-1}{2}T}^{p-2}\cos\Tonde{\frac{d-1}{2}T} U\, dTdS,
\end{equation}
where $U(T, X_0, \vec X)=\cos\Tonde{(2+\frac{d-1}{2})T}Y_{2, 0}(X_0)$; see \eqref{eq:U_sph_harm}. As in the previous section, we have written the generic point $X\in \SSS^d$ as $X=(X_0, \Xvec)$, where $X_0\in[-1, 1]$, to exploit the fact that $Y_{2, 0}$ is a function of $X_0$ only. Taking into account the definition \eqref{eq:image_of_penrose} of $\Pcal(\R^{1+d})$, the right-hand side of the previous identity reads 
\begin{multline}\label{eq:critpoint_failure_wipII}
	-p\lvert \SSS^{d-1}\rvert\int_{-\pi}^\pi \abs{\cos \frac{d-1}{2}T}^{p-2}\cos\Tonde{\frac{d-1}{2}T} \cos\Tonde{(2+\frac{d-1}{2})T}\, dT \\ 
	\times \int_0^{\pi-\abs{T}} Y_{2, 0}(\cos R)\,(\sin R)^{d-1}\, dR.
\end{multline}
We have used the formula $dS=(\sin R)^{d-1}dR dS^{d-1}$ for the volume element of $\SSS^d$ in the polar coordinates \eqref{eq:sphere_polar_coordinates}. The zonal spherical harmonic $Y_{2, 0}$ can be expressed by the Rodrigues formula: 
\begin{equation}\label{eq:Rodrigues}
	Y_{2, 0}(X_0)= R_{2, d}(1-X_0^2)^{-\frac{d-2}{2}}\frac{d^2}{dX_0^2} (1-X_0^2)^{2+\frac{d-2}{2}}; 
\end{equation}
 see~\cite[Lemma 4, pg.\,22]{Muller98}, where $R_{2, d}>0$ is a constant whose exact value is not important here. We compute the last integral in \eqref{eq:critpoint_failure_wipII} using the change of variable $X_0=\cos R$:
\begin{equation}\label{eq:last_integral}
	\begin{split}
	\int_0^{\pi-\abs{T}} Y_{2, 0}(\cos R)(\sin R)^{d-1}\, dR&= R_{2, d}\int_{-\cos T}^1 \frac{d^2}{dX_0^2} (1-X_0^2)^{2+\frac{d-2}{2}}\, dX_0 \\ &= C_d \cos T (\sin T)^d,
	\end{split}
\end{equation}
where $C_d>0$. Inserting this into \eqref{eq:critpoint_failure_wipII} shows that it remains to prove the following:
\begin{equation}\label{eq:Remains_to_Prove}
	\begin{array}{cc}
	\Ds I(d):=\frac1{\pi}\int_{-\pi}^\pi h_d(T) P_d(T)\, dT = (-1)^\frac d 2 c_d, & \text{for some }c_d>0,
	\end{array}
\end{equation}
where $h_d(T):=\abs{ \cos \frac{d-1}{2} T}^{p-2}$ and
\begin{equation}\label{eq:actors}
		P_d(T):=\cos\Tonde{\frac{d-1}{2}T} \cos\Tonde{\frac{d+3}{2}T}\cos T (\sin T)^d.
\end{equation}

We first consider the case $d=2$. In this case we have that $p=6$, so we can evaluate $I(2)$ explicitly: 
\begin{equation}\label{eq:Itwo_evaluation}
	\begin{split}
	I(2)&=\frac1\pi\int_{-\pi}^\pi \Tonde{\cos \frac T 2}^5\cos\frac{5T}{2}\cos T\Tonde{\sin T}^2\, dT\\ 
	&=\frac4\pi\int_0^{\pi/2}(\cos T)^5\cos 5T\cos 2T(\sin 2T)^2\, dT=-\frac{5}{128}.
	\end{split}
\end{equation}
In the case $d\ge 4$ we will use the Parseval identity:
\begin{equation}\label{eq:Parseval_aftermath}
	I(d)=\frac{\hat{h}_d(0)\hat{P}_d(0)}2+\sum_{k=1}^\infty \hat{h}_d(k)\hat{P}_d(k),
\end{equation}
where $\hat{f}(k):=\frac{1}{\pi}\int_{-\pi}^\pi f(T)\cos(kT)\, dT$. We remark that, with this choice of notation, 
\begin{equation}\label{eq:normalization_Fourier}
\text{if }f(T)=\frac{a_0}2  +\sum_{k=1}^\infty a_k \cos(kT), \text{ then } a_k=\hat{f}(k).
\end{equation} 
\begin{lem}\label{lem:FractFourierSeries}
	If $k\ne m(d-1)$ where $m\in\N_{\ge 0}$ then $\hat{h}_d(k)=0$.
\end{lem}
\begin{proof}[Proof of Lemma \ref{lem:FractFourierSeries}.] 
	Consider $u\in[-\sqrt{2}, \sqrt{2}]$. We set $\abs{u}^{p-2}=(1+v)^\frac{p-2}{2}$, with $v=u^2-1$, and we expand it using the binomial series. This yields
	\begin{equation}\label{eq:u_expansion}
		\Ds |u|^{p-2}=\sum_{j=0}^\infty \binom{(p-2)/2}{j}(u^2-1)^j,
	\end{equation}
	and the series converges uniformly by Raabe's criterion (here we use that $p>2$). Taking $u=\cos\frac{d-1}{2}T$, we obtain
	\begin{equation}\label{eq:cos_expansion}
		\abs{\cos \frac{d-1}{2}T}^{p-2}=\sum_{j=0}^\infty (-1)^j \binom{(p-2)/2}{j}\Tonde{\sin \frac{d-1}{2}T}^{2j},
	\end{equation}
	For each $j\in \N_{\ge 0}$ we can develop
	\begin{equation}\label{eq:SinFract}
		\begin{split}
		\Tonde{ \sin \frac{d-1}{2} T}^{2j}&= \frac{(-1)^j}{2^{2j}}\Tonde{ e^{i\frac{d-1}{2}T}-e^{-i\frac{d-1}2 T}}^{2j} \\		
&=\frac{(-1)^j}{2^{2j}}\sum_{m=0}^{2j}\binom{2j}{m}(-1)^me^{i(j-m)(d-1)T} \\
&=\frac{1}{2^{2j}}\Tonde{ \binom{2j}{j} + 2\sum_{m=1}^j \binom{2j}{j-m}(-1)^m \cos (m(d-1)T)}.
	\end{split}
	\end{equation}
	This shows that each summand in \eqref{eq:cos_expansion} is a linear combination of the terms $\cos(m(d-1)T)$, with $m\in\N_{\ge 0}$, which in light of \eqref{eq:normalization_Fourier} completes the proof.
 \end{proof}	
We now turn to the term $P_d$ introduced in \eqref{eq:actors}. Using the addition formula for the cosine, and developing $(\sin T)^d$ like we did in the previous proof, we can express $P_d$ as a trigonometric polynomial of degree $2(d+1)$:
\begin{equation}\label{eq:Pd_first_exp}
	\begin{split}
		P_d(T)= &2^{-d-2}\Tonde{\cos T + \cos 3T + \cos dT + \cos (d+2)T} \\ 
		&\times \Tonde{\binom{d}{d/2} + 2\sum_{k=1}^{d/2} (-1)^k\binom{d}{d/2-k}\cos(2kT)};
	\end{split}
\end{equation}
so, in particular, $\hat{P}_d(k)=0$ if $k> 2(d+1)$. Since $d\ge 4$, we infer from this and from Lemma \ref{lem:FractFourierSeries} that $I(d)$ reduces to the sum of four terms: 
\begin{equation}\label{eq:FractFourierAftermath}
	\begin{split}
	I(d)=&\frac{1}{2}\hat{h}_d(0)\hat{P}_d(0)+ \sum_{m=1}^{3}\hat{h}_d(m(d-1))\hat{P}_d(m(d-1)).
	\end{split}
\end{equation}
Actually, we have that $\hat{P}_d(3(d-1))=0$. This is obvious for $d\ge 6$, because in that case $3(d-1)$ exceeds $2(d+1)$, and can be established for $d=4$ by inspection of the formula 
\begin{equation}\label{eq:Pfour_explicit}
	\begin{split}
		P_4(T)= &2^{-6}\Tonde{\cos T + \cos 3T + \cos 4T + \cos 6T}\Tonde{6-8\cos 2T +2\cos 4T},
	\end{split}
\end{equation}
again using \eqref{eq:normalization_Fourier}. 

To compute the remaining coefficients, we use the addition formula for the cosine to rewrite \eqref{eq:Pd_first_exp} as
\begin{equation}\label{eq:Pd_second_exp}
	2^{d+2}P_d(T)=P_{d, 1}(T)+P_{d,3}(T)+P_{d, d}(T)+P_{d, d+2}(T) , 
\end{equation}
where each summand is given by
\begin{equation}\label{eq:Pdh}
	P_{d, h}(T)=\binom{d}{d/2}\cos hT + \sum_{k=1}^{d/2}(-1)^k\binom{d}{d/2-k}(\cos(2k-h)T + \cos(2k+h)T),
\end{equation}
for $h=1,3, d, d+2$. To compute $\hat{P}_d(0)$, we observe that the only contributing term is obtained for $2k-h=0$, and that can only happen for $h=d$ and $k=d/2$. By \eqref{eq:normalization_Fourier} we have
\begin{equation}\label{eq:hatPzero}
	2^{d+2}\hat{P}_d(0)=\hat{P}_{d,d}(0)=2(-1)^\frac d 2.
\end{equation}
To compute $\hat{P}_d(d-1)$ we observe that, as $d-1$ is odd, the only contributing terms are obtained for $h=1, 3$: 
\begin{equation}\label{eq:hatPdminusone}
	\begin{split}
	2^{d+2}\hat{P}_d(d-1)&=\hat{P}_{d,1}(d-1)+\hat{P}_{d, 3}(d-1)\\ 
		&=(-1)^\frac d 2 -(-1)^\frac d 2\binom{d}{1}+ (-1)^\frac d 2\binom{d}{2}\\ 
		&=(-1)^\frac d 2 \frac{(d-1)(d-2)}{2}.
	\end{split}
\end{equation}
With analogous reasoning we obtain
\begin{equation}\label{eq:hatPtwodminusone}
	\begin{split}
		2^{d+2}\hat{P}_d(2(d-1)) &=  \hat{P}_{d,d}(2(d-1))+\hat{P}_{d, d+2}(2(d-1)) \\
					&=-(-1)^{\frac d 2} \binom{d}{1} +(-1)^{\frac{d}{2}}\binom{d}{2}\\
					&=(-1)^\frac d 2 \Tonde{\frac{(d-1)(d-2)}{2}-1}.
	\end{split}
\end{equation}
Inserting the preceding computations into Parseval's identity \eqref{eq:FractFourierAftermath}, we obtain the formula 
\begin{multline}\label{eq:Id_Fourier}
	(-1)^\frac{d}{2}2^{d+2}I(d)=\hat{h}_d(0) -\hat{h}_d(2(d-1)) \\+\frac{(d-1)(d-2)}{2}\Tonde{\hat{h}_d(d-1)+\hat{h}_d(2(d-1))}.
\end{multline}

To conclude the proof of \eqref{eq:Remains_to_Prove}it will suffice to prove that
\begin{equation}\label{eq:sign_relations}
	\begin{array}{ccc}
		\hat{h}_d(0)- \hat{h}_d(2(d-1))>0, & \text{and} & \hat{h}_d(d-1)+ \hat{h}_d(2(d-1)) >0.
	\end{array}
\end{equation}
The first inequality follows immediately from the definition \eqref{eq:actors} of $h_d$:
\begin{equation}\label{eq:FirstIneq}
	 \hat{h}_d(0)-\hat{h}_d(2(d-1)) =\frac1\pi \int_{-\pi}^\pi \abs{\cos \frac{d-1}{2}T}^{p-2}(1-\cos 2(d-1) T)\, dT>0.
\end{equation}
To prove the second inequality we note that the change of variable $T\mapsto \frac 2 {d-1} T$ produces 
\begin{equation}\label{eq:SecondIneqWIP}
	\hat{h}_d(d-1)+\hat{h}_d(2(d-1))=\frac2{\pi(d-1)}\int_{-\frac{d-1}{2}\pi}^{\frac{d-1}{2}\pi}\abs{\cos T}^{p-2}(\cos 2T + \cos 4T)\, dT.
\end{equation}
The integrand function in the right-hand side is $\pi$-periodic and even. Therefore, the integral is an integer multiple of the integral over $[0, \pi/2]$. Moreover, $\cos 2T+\cos 4T=2\cos T \cos 3T$. We get 
\begin{equation}\label{eq:SecondIneqWWIP}
	\hat{h}_d(d-1)+\hat{h}_d(2(d-1))=\frac{4(d-2)}{\pi(d-1)}\int_0^{\pi /2}\abs{\cos T}^{p-2}\cos T\cos 3T\, dT.
\end{equation}
To conclude the proof, we notice that 
\begin{equation*}
	\int_0^{\pi/6} \cos T\cos 3T\, dT = -\int_{\pi/6}^{\pi/2}\cos T\cos 3T\, dT>0,
\end{equation*}
and $\abs{\cos T}^{p-2}$ is strictly decreasing on $[0, \pi/2]$, so 
\begin{equation}\label{eq:EstimatesToSum}
	\int_0^{\pi/6} \abs{\cos T}^{p-2}\cos T\cos 3T\, dT > -\int_{\pi/6}^{\pi/2}\abs{\cos T}^{p-2}\cos T\cos 3T\, dT,
\end{equation}
which proves that the right-hand side in \eqref{eq:SecondIneqWWIP} is strictly positive. This shows that the second inequality in \eqref{eq:sign_relations} holds, and the proof of Theorem \ref{thm:put} is complete.
\end{proof}

	\section{Proof of the lower bound in Theorem\ \ref{thm:main}} \label{sec:three_d_proof}
 In this section the spatial dimension $d$ will be $3$, so that $p=4$ in the definition \eqref{eq:DefFunct} of the deficit functional $\psi$. We will use Corollary \ref{cor:sym_trick} to compute integrals on $\R^{1+3}$ using the Penrose transform, taking advantage of the simple expression \eqref{eq:Ustar_is_cosine} of $S_t\fboldstar$ under such transform. In particular, Foschi's constant $\StrichartzConstant=(3/16\pi)^\frac14$ has the representation 
 \begin{equation}\label{eq:Foschi_represented}
 	\StrichartzConstant^4=\frac{\norm{S_t \fboldstar }_{L^4(\R^{1+3})}^4}{\norm{\fboldstar}_{\PhaseSpace(\R^3)}^4}=\frac{\int_{-\pi}^\pi (\cos T)^4\, dT}{2\abs{\SSS^3}}.
\end{equation}
Here we have used the fact that $\norm{\fboldstar}_{\PhaseSpace(\R^3)}^2= \lvert \SSS^3 \rvert$; see \eqref{eq:fboldstar_norm}. 
\begin{lem}\label{lem:main}
There exists a quadratic functional $Q\colon \PhaseSpace(\R^3)\to [0, \infty)$ such that
	\begin{equation}\label{eq:four_homogeneous_Taylor_expansion_new}
	\psi(\fboldstar + \fbold)= Q(\fbold) + O(\norm{\fbold}^3_\PhaseSpace ),
\end{equation}
for all $\fbold\in \PhaseSpace(\R^3)$. It holds that $Q(\fbold)=0$ if and only if $\fbold\in T_\fboldstar\Mrom$, and moreover 
\begin{equation}\label{eq:Q_sharp}
	\begin{array}{cc}
		Q(\fbold)\ge \frac{\pi}{4}\norm{\fbold}_{\PhaseSpace}^2 , & \forall \fbold\bot T_{\fboldstar}\Mrom,
	\end{array}
\end{equation}
where the constant $\frac{\pi}{4}$ cannot be replaced by a larger one.
\end{lem}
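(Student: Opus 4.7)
\emph{Plan.} Since $\fboldstar$ is a global minimizer of $\psi$ (it is an extremizer of the Strichartz inequality, so $\psi \ge 0$ and $\psi(\fboldstar) = 0$), both the zeroth and first-order terms in the Taylor expansion of $\psi$ around $\fboldstar$ vanish. Writing $u = S_t\fbold$ and $u_\star = S_t\fboldstar$, the identity $(u_\star + u)^4 = u_\star^4 + 4u_\star^3 u + 6 u_\star^2 u^2 + 4 u_\star u^3 + u^4$ is exact, and the expansion of $\Scal^4\norm{\fboldstar + \fbold}_\PhaseSpace^4$ is a polynomial of degree four in $\fbold$. Extracting the quadratic part identifies
\begin{equation*}
Q(\fbold) = 4\Scal^4 \Braket{\fboldstar | \fbold}_\PhaseSpace^2 + 2\Scal^4 \norm{\fboldstar}_\PhaseSpace^2 \norm{\fbold}_\PhaseSpace^2 - 6\iint_{\R^{1+3}} u_\star^2 u^2 \, dtdx,
\end{equation*}
with cubic and quartic remainders $-4\iint u_\star u^3\,dtdx - \iint u^4\,dtdx = O(\norm{\fbold}_\PhaseSpace^3)$ by H\"older and Strichartz.

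The central simplification is that for $d = 3$ and $a = b = 2$ the conformal weight $\tfrac{d-1}{2}(a+b) - (d+1)$ in Corollary \ref{cor:sym_trick} vanishes, so
\begin{equation*}
\iint_{\R^{1+3}} u_\star^2 u^2 \, dtdx = \frac{1}{2}\iint_{\SSS^1 \times \SSS^3}(\cos T)^2 U^2 \, dTdS,
\end{equation*}
where $U$ is the Penrose transform of $u$ and $U_\star = \cos T$. Substituting the spherical harmonics expansion \eqref{eq:U_sph_harm} for $d = 3$ and using orthonormality of $\{Y_{\ell, m}^3\}$ on $\SSS^3$ collapses the spatial integral. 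The cross terms in $\Fhat_0(\ell, m)\Fhat_1(\ell, m)$ vanish because $(\cos T)^2 \sin(2(\ell+1)T)$ is odd in $T$, so $Q$ is diagonal in the $(\Fhat_0, \Fhat_1)$ basis. The remaining $T$-integrals are elementary: $\int_{-\pi}^\pi (\cos T)^2 \cos^2((\ell+1)T)\, dT$ equals $3\pi/4$ when $\ell = 0$ and $\pi/2$ when $\ell \ge 1$, while the $\sin^2$ analogue gives $\pi/4$ when $\ell = 0$ and $\pi/2$ when $\ell \ge 1$. Using $\Scal^4 = 3\pi/(8\abs{\SSS^3})$ (from \eqref{eq:Foschi_represented}) and $\norm{\fboldstar}_\PhaseSpace^2 = \abs{\SSS^3}$ from \eqref{eq:fboldstar_norm}, a straightforward collection of coefficients shows that the contributions at $\ell = 0$ cancel exactly (this is where the anomalies in the $T$-integrals are used) and yields
\begin{equation*}
Q(\fbold) = \frac{3\pi}{4}\sum_{\ell \ge 1}\sum_{m=0}^{\Degeneracy(\ell)}(\ell - 1)\left[\Fhat_0(\ell, m)^2 + \frac{\Fhat_1(\ell, m)^2}{(\ell+1)^2}\right].
\end{equation*}

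The remaining claims read off from this formula. The summand vanishes for $\ell \in \{0, 1\}$ and is strictly positive for $\ell \ge 2$, so by \eqref{eq:belong_to_tangent}, $Q(\fbold) = 0$ if and only if $\fbold \in T_\fboldstar \Mrom$. For $\fbold \bot T_\fboldstar \Mrom$, \eqref{eq:ortho_to_tangent} restricts both sums to $\ell \ge 2$, and comparison with the representation \eqref{eq:scalprod_stereographic} of the norm shows that both the $\Fhat_0(\ell, m)^2$ and the $\Fhat_1(\ell, m)^2$ contributions produce the common ratio $\frac{3\pi}{4}\cdot\frac{\ell - 1}{\ell + 1}$, which is minimized at $\ell = 2$ to $\pi/4$. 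The bound is saturated by any $\fbold$ supported on a single $\ell = 2$ spherical harmonic mode, giving sharpness.

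The proof is essentially a bookkeeping exercise once the setup is in place; the only delicate step is verifying that the $\ell = 0$ anomaly in the trigonometric integrals produces the \emph{exact} cancellation of the $\ell \in \{0, 1\}$ terms of $Q$, which is what makes the kernel of $Q$ match $T_\fboldstar \Mrom$ on the nose. The parity argument decoupling $\Fhat_0$ from $\Fhat_1$ is what diagonalizes the quadratic form and lets us extract the spectral gap directly; the analogous computation in the fifth-dimensional energy-Strichartz setting of Section 6 will lack this decoupling, which is the source of the significant additional difficulty advertised in the introduction.
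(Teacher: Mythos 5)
Your proof is correct and follows essentially the same route as the paper's: second-order Taylor expansion of $\psi$ at $\fboldstar$, the Penrose transform with vanishing conformal weight, diagonalization in spherical harmonics via parity in $T$ and $L^2(\SSS^3)$-orthonormality, and the same elementary trigonometric integrals producing the same explicit diagonal formula for $Q$ and the sharp constant $\frac{\pi}{4}$ attained at $\ell=2$. The only (harmless) deviation is that you justify the vanishing of the first-order term by invoking Foschi's sharp constant to make $\fboldstar$ a global minimizer of $\psi$, whereas the paper cites Theorem~\ref{thm:put}, which keeps the local analysis independent of Foschi's theorem.
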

\begin{proof}
We have that $\psi(\fboldstar)=0$ by definition of $\psi$, and we have proved in Theorem \ref{thm:put} that $\left.\frac{d}{d\eps} \psi(\fboldstar + \eps \fbold)\right|_{\eps=0}=0$ for all $\fbold\in \PhaseSpace(\R^3)$. So \eqref{eq:four_homogeneous_Taylor_expansion_new} holds with $Q(\fbold)=\frac12\left.\frac{d^2}{d\eps^2}\psi(\fboldstar+\eps \fbold)\right|_{\eps=0}$. Expanding we see that
\begin{equation}\label{eq:Q_functional}
	\begin{split}
	 Q(\fbold) = & 
	\,\StrichartzConstant^4\Tonde{ 
		4\Braket{ \fboldstar | \fbold}_\PhaseSpace^2 +2\norm{\fboldstar}_{\Hcaldot^\frac12}^2\norm{\fbold}_{\Hcaldot^\frac12}^2}-6 \iint_{\R^{1+\SpaceDim}} (S_t\fboldstar)^2(S_t\fbold)^2\, dtdx.
	\end{split}
\end{equation}
We record that, for all $\fbold=(f_0, f_1)\in \PhaseSpace(\R^3)$, it holds that 
\begin{equation}\label{eq:Qsymmetry}
	Q(\fbold)=Q(f_0, 0)+Q(0, f_1). 
\end{equation}
To prove this, we start by recalling that $\norm{\fbold}_\PhaseSpace^2=\Braket{f_0|f_0}_{\Hdot^\frac12}+\Braket{f_1|f_1}_{\Hdot^{-\frac12}}$. Moreover, since $\fboldstar=(f_{\star\, 0}, 0)$, we have that
	$\Braket{\fboldstar | \fbold}_\PhaseSpace = \Braket{f_{\star\, 0}|f_0}_{\Hdot^\frac12}$, so the first summand in the right-hand side of \eqref{eq:Q_functional} splits into the sum of a term depending on $f_0$ only and a term depending on $f_1$ only. The other summand splits in the same way; indeed, by the definition \eqref{eq:wave_propagator} of the wave propagator, $S_t\fboldstar = \cos(t\sqrtDelta)f_{\star\, 0}$, therefore
\begin{multline}\label{eq:SimplifTwo}
	\iint_{\R^{1+3}}\! (S_t\fboldstar)^2(S_t\fbold)^2 \!\! = \!\! 
	\iint_{\R^{1+3}}\!(S_t\fboldstar)^2(\cos t\sqrtDelta f_0)^2\! +\!\iint_{\R^{1+3}}\!(S_t\fboldstar)^2\!\Tonde{\frac{\sin t\sqrtDelta }{\sqrtDelta} f_1}^2  
	\\ \!\!\!+ 2\!\!\iint_{\R^{1+3}}\!(\cos t\sqrtDelta f_{\star\,0})^2\cos t\sqrtDelta f_0 \frac{\sin t \sqrtDelta}{\sqrtDelta}f_1,
\end{multline}
where the last integral vanishes, as can be seen with the change of variable $t\mapsto -t$. This proves \eqref{eq:Qsymmetry}. 

We now bound $Q(\fbold)$ from below, starting with the term $Q(f_0,0)$. We assume that $\fbold$ and $(F_0, F_1)$  are related via the Penrose transform \eqref{eq:Penrose_field_transformation}. By the formula \eqref{eq:scalprod_stereographic}, that expresses the $\PhaseSpace$ scalar product in terms of $(F_0, F_1)$, we rewrite the first summand in the right-hand side of \eqref{eq:Q_functional} as 
\begin{multline}\label{eq:Qfzero_pos}
	\StrichartzConstant^4\Tonde{4\Braket{f_{\star\,0} | f_0}_{\Hdot^\frac12}^2 + 2 \norm{f_{\star\,0}}_{\Hdot^\frac12}^2 \norm{ f_0 }_{\Hdot^\frac12}^2}=\\
	\frac{\int_{-\pi}^\pi (\cos T)^4\, dT}{2\abs{\SSS^3}} \Tonde{ 4 \abs{\SSS^3} \Fhat_0(0,0)^2 +2\abs{\SSS^3}\sum_{\ell=0}^\infty\sum_{m=0}^{\Degeneracy(\ell)}(\ell+1)\Fhat_0(\ell, m)^2},
\end{multline}
where we have used the property that $F_{\star\,0}=1=\sqrt{\abs{\SSS^3}}Y_{0,0}$; see \eqref{eq:Ustar_is_cosine}. We compute the other summand using Corollary \ref{cor:sym_trick}:
\begin{multline}
\label{eq:Qfzero_neg} 
	6\!\!\iint_{\R^{1+3}}\!\!(S_t\fboldstar)^2(\cos(t\sqrtDelta) f_0)^2 = 3\!\!\! \iint_{\SSS^1\times \SSS^3}\!\!\!\!\Tonde{\!\! \cos T \!\sum_{\ell, m}\cos(\ell+1)T\, \Fhat_0(\ell,m) Y_{\ell,m}\!}^2\!\!\!.
\end{multline}
By the $L^2(\SSS^3)$-orthonormality of $Y_{\ell,m}$, the right-hand side equals
\begin{multline}\label{eq:RHS_Q_functional}
	3\int_{-\pi}^\pi (\cos T)^4\,dT \Fhat_0(0,0)^2
	+3\sum_{\ell=1}^\infty\sum_{m=0}^{\Degeneracy(\ell)} \int_{-\pi}^\pi (\cos T\cos(\ell+1)T)^2\, dT \Fhat_0(\ell, m)^2.
\end{multline}
For all $\ell\ge1$, it holds that  
\begin{equation}\label{eq:trig_computation} 
	3\int_{-\pi}^\pi (\cos T\cos (\ell+1)T)^2\,dT=\frac{3\pi}{2}=2\int_{-\pi}^\pi (\cos T)^4\,dT,
\end{equation} 
so, subtracting the last equation from \eqref{eq:Qfzero_pos}, the terms corresponding to $\ell=0$ and $\ell=1$ vanish, and we obtain that
\begin{equation}\label{eq:Qfzero}
	Q(f_0, 0)=\frac{3\pi}4\sum_{\ell=2}^\infty \sum_{m=0}^{\Degeneracy(\ell)} (\ell-1)\Fhat_0(\ell, m)^2.
\end{equation}
The term $Q(0, f_1)$ is computed in the same way, and the end result is:
\begin{equation}\label{eq:Q_g_explicit}
	Q(\fbold)=\frac{3\pi}{4}\sum_{\ell=2}^\infty\sum_{m=0}^{\Degeneracy(\ell)} (\ell-1)\Quadre{\Fhat_0(\ell,m)^2 +\frac{\Fhat_1(\ell,m)^2}{(\ell+1)^2}}.
\end{equation}
From this we see that $Q(\fbold)=0$ if and only if $\Fhat_0(\ell, m)=\Fhat_1(\ell, m)=0$ for $\ell\ge 2$, which is equivalent to $\fbold\in T_\fboldstar \Mrom$; see \eqref{eq:belong_to_tangent}. 

It remains to prove the sharp inequality \eqref{eq:Q_sharp}. For $\ell\ge 2$, it holds that 
\begin{equation}\label{eq:TowSharpThree}
	\begin{array}{ccc}
		3(\ell-1)\ge \ell+1, &\text{and}& 3\frac{\ell -1}{(\ell+1)^2}\ge \frac{1}{\ell+1}, 
	\end{array}
\end{equation}
with equality for $\ell=2$. Therefore, \eqref{eq:Q_g_explicit} implies the sharp inequality
\begin{equation}\label{eq:sharp_Qg_ineq}
	Q(\fbold)\ge \frac{\pi}{4} \sum_{\ell=2}^\infty \sum_{m=0}^{\Degeneracy(\ell)}(\ell+1)\Fhat_0(\ell,m)^2+(\ell+1)^{-1}\Fhat_1(\ell,m)^2.
\end{equation}
The expression on the right-hand side equals $\frac\pi4 \norm{\fbold}_{\PhaseSpace(\R^3)}^2$ precisely when $\Fhat_0(\ell,m)=\Fhat_1(\ell,m)=0$ for $\ell=0,1$, which is equivalent to $\fbold\bot T_{\fboldstar}\Mrom$; see~\eqref{eq:ortho_to_tangent}. This completes the proof.
\end{proof}
\begin{rem}\label{rem:quad_form_symmetry}
	The fact that $Q(\fbold)=0$ for $\fbold\in T_\fboldstar \Mrom$ is a consequence of the criticality of $\fboldstar$ and of the invariance of $\psi$ under the symmetries $\Gamma_{\!\alphabold}$ (defined in~\eqref{eq:cGamma_notation}); indeed, differentiating the identity $\psi(c\Gamma_{\!\alphabold} \fboldstar)=0$ twice with respect to $c$ we get $Q(\fboldstar)=0$, and differentiating twice with respect to $\alpha_j$, we get 
\begin{equation*}
	Q\Tonde{\left.\frac{\partial}{\partial\alpha_j} \Gamma_{\!\alphabold}\fboldstar\right|_{\alphabold=\obold}}=0.
\end{equation*} 
	In Lemma \ref{lem:main} we proved a sharper result; namely, that $Q(\fbold)$ vanishes \emph{if and only if} $\fbold\in T_\fboldstar\Mrom$, and we gave a sharp explicit bound. In the language of the calculus of variations we can say that $\fboldstar$ is a \emph{non-degenerate} local minimizer of the deficit functional $\psi$, up to symmetries.  
\end{rem}

\begin{prop}\label{prop:main}
  For all $\fbold \in \Hcaldot^\frac12(\R^3)$ such that 
  \begin{equation}\label{eq:distance_condition}
  	\dist(\fbold, \Mrom)< \norm{\fbold}_{\Hcaldot^\frac12},
	\end{equation}
	it holds that
  \begin{equation}\label{eq:prop_main}
  	\frac{1}{3}\StrichartzConstant^2\dist(\fbold, \Mrom)^2 + O(\dist(\fbold, \Mrom)^3)
 \le \StrichartzConstant^2\norm{\fbold}_{\Hcaldot^\frac12}^2-\norm{S_t \fbold}_{L^4(\R^{1+3})}^2 .
 \end{equation}
  The result does not hold if $\frac13 \StrichartzConstant^2$ is replaced with a larger constant. 
\end{prop}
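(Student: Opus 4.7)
The plan is to reduce~\eqref{eq:prop_main} to a local expansion around $\fboldstar$ via the symmetry group action, then exploit Lemma~\ref{lem:main} together with the algebraic factorization $a^2-b^2=(a-b)(a+b)$. Given $\fbold$ satisfying~\eqref{eq:distance_condition}, since $\obold\in\Mrom$ gives $\norm{\fbold-\obold}_{\PhaseSpace}=\norm{\fbold}_{\PhaseSpace}$, the strict inequality ensures the infimum in~\eqref{eq:distance} is attained (by a minimizing-sequence argument) at some $\fboldstar'=c\,\Gamma_{\alphabold}\fboldstar\in\Mrom\setminus\set{\obold}$, and $h:=\fbold-\fboldstar'$ satisfies $h\bot T_{\fboldstar'}\Mrom$ with $\norm{h}_{\PhaseSpace}=\dist(\fbold,\Mrom)$. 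Using that both sides of~\eqref{eq:prop_main} are homogeneous of degree $2$ under $\fbold\mapsto c^{-1}\fbold$, and that $\Gamma_{\alphabold}$ is unitary on $\PhaseSpace$ and preserves the $\Lfour$-Strichartz norm by~\eqref{eq:Gamma_alphabold_is_unitary}, I reduce to the case $\fbold=\fboldstar+h$ with $h\bot T_{\fboldstar}\Mrom$.

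I then factor the quartic deficit:
\begin{equation*}
  \StrichartzConstant^2\norm{\fbold}_\PhaseSpace^2-\norm{S_t\fbold}_{\Lfour}^2=\frac{\psi(\fbold)}{\StrichartzConstant^2\norm{\fbold}_\PhaseSpace^2+\norm{S_t\fbold}_{\Lfour}^2}.
\end{equation*}
For the numerator, Lemma~\ref{lem:main} yields $\psi(\fbold)=Q(h)+O(\norm{h}_\PhaseSpace^3)$, with the sharp bound $Q(h)\ge\tfrac{\pi}{4}\norm{h}_\PhaseSpace^2$ valid because $h\bot T_{\fboldstar}\Mrom$. For the denominator, using the Strichartz inequality $\norm{S_t\fbold}_{\Lfour}\le\StrichartzConstant\norm{\fbold}_\PhaseSpace$ together with the Pythagorean identity $\norm{\fbold}_\PhaseSpace^2=\norm{\fboldstar}_\PhaseSpace^2+\norm{h}_\PhaseSpace^2$ (since $\fboldstar\in T_{\fboldstar}\Mrom$, see entry 1 of~\eqref{eq:symmetry_table}), I obtain the upper bound $2\StrichartzConstant^2\norm{\fboldstar}_\PhaseSpace^2+O(\norm{h}_\PhaseSpace^2)$. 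Dividing produces
\begin{equation*}
  \StrichartzConstant^2\norm{\fbold}_\PhaseSpace^2-\norm{S_t\fbold}_{\Lfour}^2\ge\frac{\pi}{8\StrichartzConstant^2\norm{\fboldstar}_\PhaseSpace^2}\norm{h}_\PhaseSpace^2+O(\norm{h}_\PhaseSpace^3).
\end{equation*}
Inserting $\norm{\fboldstar}_\PhaseSpace^2=\abs{\SSS^3}=2\pi^2$ from~\eqref{eq:fboldstar_norm} and Foschi's value $\StrichartzConstant^4=\tfrac{3}{16\pi}$ recalled in~\eqref{eq:Foschi_represented}, the coefficient becomes $\tfrac{\pi}{8\StrichartzConstant^2\cdot2\pi^2}=\tfrac{\StrichartzConstant^2}{16\pi\StrichartzConstant^4}=\tfrac{\StrichartzConstant^2}{3}$, yielding~\eqref{eq:prop_main}.

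For the sharpness of $\tfrac13\StrichartzConstant^2$, I would test with $\fbold_n=\fboldstar+\eps_n h$ where $\eps_n\to0^+$ and $h$ corresponds, via the Penrose transform~\eqref{eq:Penrose_field_transformation}, to a single zonal degree-$2$ spherical harmonic mode; then $h\bot T_{\fboldstar}\Mrom$, the lower bound $Q(h)=\tfrac{\pi}{4}\norm{h}_\PhaseSpace^2$ of Lemma~\ref{lem:main} is \emph{attained}, the denominator upper bound is asymptotically tight as $\eps_n\to 0$, and $\dist(\fbold_n,\Mrom)=\eps_n\norm{h}_\PhaseSpace+O(\eps_n^2)$ by the non-degeneracy of the projection onto the finite-dimensional manifold $\Mrom$. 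Hence each inequality in the derivation becomes asymptotically sharp, so replacing $\tfrac13\StrichartzConstant^2$ by any larger constant eventually breaks~\eqref{eq:prop_main}.

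The main obstacle I foresee is the attainment of the infimum in the reduction step: while the hypothesis rules out $\obold$ as a minimizer, the non-compactness of the symmetry group (translations, dilations, and Lorentz boosts) means a minimizing sequence $\fboldstar_n\in\Mrom$ need not converge. Ruling out escape of mass, concentration, or blow-up of the parameters $\alphabold_n$ is naturally handled by the profile decomposition of Theorem~\ref{thm:profile_decomposition}. Once attainment is secured, the remaining ingredients (the factorization $a^2-b^2=(a-b)(a+b)$, Lemma~\ref{lem:main}, and the explicit value of $\StrichartzConstant$) combine straightforwardly to deliver the sharp constant $\tfrac{1}{3}\StrichartzConstant^2$.
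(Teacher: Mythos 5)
Your argument is correct and follows the paper's strategy: the same attainment-plus-orthogonal-projection step (the paper precludes escape of the parameters $\alphabold_n$ via the weak-interaction lemmas of Ramos rather than the full profile decomposition, but the tool is the same), followed by the quantitative spectral bound of Lemma~\ref{lem:main}, yielding the same constant $\tfrac13\StrichartzConstant^2$. The only real variation is in passing from the quartic deficit $\psi$ to the quadratic one: you factor $a^2-b^2=(a-b)(a+b)$ and bound the denominator by $2\StrichartzConstant^2\norm{\fboldstar}_\PhaseSpace^2+O(\norm{\fboldbot}_\PhaseSpace^2)$, whereas the paper Taylor-expands the $2$-homogeneous functional $\phi$ directly to second order; both computations produce the identical coefficient $Q(\fboldbot)/(2\StrichartzConstant^2\norm{\fboldstar}_\PhaseSpace^2)$, and your factorization has the small advantage of reusing the expansion of $\psi$ from Lemma~\ref{lem:main} verbatim instead of recomputing first and second derivatives of $\phi$.
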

\begin{proof}
\begin{figure}[!hbp]
\centering 
\begin{tikzpicture} [
tangent/.style={
        decoration={
            markings,
            mark=
                at position #1
                with
                {
                    \coordinate (tangent point-\pgfkeysvalueof{/pgf/decoration/mark info/sequence number}) at (0pt,0pt);
                    \coordinate (tangent unit vector-\pgfkeysvalueof{/pgf/decoration/mark info/sequence number}) at (1,0pt);
                    \coordinate (tangent orthogonal unit vector-\pgfkeysvalueof{/pgf/decoration/mark info/sequence number}) at (0pt,1);
                }
        },
        postaction=decorate
    },
    use tangent/.style={
        shift=(tangent point-#1),
        x=(tangent unit vector-#1),
        y=(tangent orthogonal unit vector-#1)
    },
    use tangent/.default=1
]
	\draw (0, 0) -- (3, 5)         ;
	\draw (0,0) -- (-3, 5)         ;
	\draw (0, 5) circle (3 and 1/3) ;
	\draw (0, 5) node {\Mrom}; 
	\fill ({atan(5/3)}: {2/3*sqrt(34)}) node (fboldstar) {} node [xshift=-8, yshift=7]  {$c\fboldstar$};
	\draw [help lines, tangent=1] ( {atan(5/3)}: {2/3*sqrt(34)}) arc (0:-135:2/3*3 and 2/3*1/3) node (Gamma) {} node[xshift=12, yshift=10, circle]  {\color{black} $c\Gamma_{\!\alphabold} \fboldstar$} ; 
	\draw [use tangent, rotate=-45, -{>[scale=1.5]}] (0,0) --node[sloped, rotate=-45, above] {\color{black} $\Gamma_{\!\alphabold} \fboldbot$}  (0,2) node (f) {}  node [anchor=north east] {$\fbold$};
	\draw[use tangent, rotate=-45, help lines] (0,2) -- + (1,0) (0, 0) -- ++ (1,0) ++(-0.1, 0) --node[sloped, rotate=-45, above] {\color{black} $d(\fbold, \Mrom)$} +(0, 2); 
	  	
	\draw (0,0) node[anchor=west] {$\obold$};
	\draw [help lines, tangent=0] ( {atan(5/3)}: {2/3*sqrt(34)}) arc (0:-135:2/3*3 and 2/3*1/3) ; 
	\draw [use tangent,rotate=-30, -{>[scale=1.5]}] (0,0) --node[sloped, rotate=-35, above] {\color{black} $\fboldbot$} (0,2) node (GammaMinus) {}  node [anchor=north west] {$\Gamma_{-\alphabold}\fbold$};
	\fill (Gamma) circle (0.05) (f) circle (0.05) (fboldstar) circle (0.05) (GammaMinus) circle (0.05);
	\draw[use tangent, rotate=-30, help lines] 
		(0,2) -- + (1,0) 
		 (1,0) 
		++(-0.1, 0) --node[sloped, rotate=-30, below] {\color{black} $d(\fbold, \Mrom)$} +(0, 2); 
\end{tikzpicture}
 \caption{Illustration of Step 1.}
\label{fig:Distance_Projection}
\end{figure}
\emph{Step 1}: First, we will show that there exists $c\Gamma_{\!\alphabold}\fboldstar\in \Mrom$, with $c\ne 0$, such that, setting
\begin{equation}\label{eq:GammaFbold}
	\Gamma_{\!\alphabold} \fboldbot := \fbold - c\Gamma_{\!\alphabold} \fboldstar,
\end{equation}
it holds that 
\begin{equation}\label{eq:min_probl_result}
	\begin{array}{ccc}
	 \norm{\fboldbot}_{\Hcaldot^\frac12}=\dist(\fbold, \Mrom) &\text{and}&\fboldbot \bot T_{\fboldstar} \Mrom.
	\end{array}
\end{equation}
To see this, we note that, by the definition-characterization \eqref{eq:Mrom_Manifold} of $\Mrom$,  
\begin{equation}\label{eq:min_probl}
	\dist(\fbold, \Mrom)^2 =\inf\Set{ \norm{\fbold}_{\Hcaldot^\frac12}^2 + c^2\norm{\fboldstar}^2_{\Hcaldot^\frac12} -2c\Braket{ \fbold | \Gamma_{\!\alphabold} \fboldstar}_{\Hcaldot^\frac12} : c, \alphabold		 },
\end{equation}
where $c\in \R, \alphabold\in \SSS^1\times \R^8\times \SO(3)$. We claim that this infimum is attained with $c\ne 0$. Indeed, if $(c_n, \alphabold_n)$ is a minimizing sequence, then $c_n$ is bounded, otherwise $\dist(\fbold, \Mrom)$ would be infinite. Then there is $c\in \R$ such that, up to a subsequence, $c_n\to c$, and by the assumption \eqref{eq:distance_condition}, $c\ne 0$. We now assume by contradiction that $\abs{\alphabold_n}\to \infty$, up to a subsequence. Since $\mathbb S^1$ and $\SO(3)$ are compact, this necessarily implies that one or more of the parameters associated to Lorentz transformations, spacetime translations or dilations must blowup. In the language of~\cite[Lemmas 3.2 and 4.1]{Ramos12}, this means that the sequence $\Gamma_{\!\alphabold_n}$ is \emph{orthogonal} to the identity, and so $\Braket{\fbold | \Gamma_{\!\alphabold_n}\fboldstar}_{\Hcaldot^\frac12} \to 0$. The minimality of $c$ would then imply $c=0$, a contradiction. We conclude that $\alphabold_n$ is bounded, thus it has a convergent subsequence.

Now we define $\fboldbot$ by \eqref{eq:GammaFbold} where $(c,\alphabold)$ attains the minimum in \eqref{eq:min_probl}. As $\Gamma_{\!\alphabold}$ is a unitary operator, the first property in \eqref{eq:min_probl_result} is satisfied. Since $c\ne0$, the tangent space $T_{c\Gamma_{\!\alphabold} \fboldstar}\Mrom$ is well defined, and since $\norm{\fbold- c\Gamma_{\!\alphabold}\fboldstar}_{\Hcaldot^\frac12}^2$ is minimizing, differentiating it we see that 
$$ \Gamma_{\!\alphabold} \fboldbot\,\bot\, T_{c\Gamma_{\!\alphabold} \fboldstar}\Mrom.$$ 
Now, $T_{c\Gamma_{\!\alphabold} \fboldstar}\Mrom = \Gamma_{\!\alphabold} (T_\fboldstar \Mrom)$ by Proposition~\ref{prop:TangentSpaces}, so we can conclude that $\Braket{ \Gamma_{\!\alphabold} \fboldbot | \Gamma_{\!\alphabold} \gbold}_{\Hcaldot^\frac12}=0$ for all $\gbold\in T_\fboldstar \Mrom$. Since $\Gamma_{\!\alphabold}$ is a unitary operator, we infer that $\fboldbot\, \bot\, T_\fboldstar\Mrom$, as claimed.

\emph{Step 2}: Consider the 2-homogeneous deficit functional defined by
\begin{equation}\label{eq:psi_notation}
	\phi(\fbold):=\StrichartzConstant^2\norm{\fbold}_{\Hcaldot^\frac12}^2-\norm{S_t\fbold}_{L^4(\R^{1+3})}^2.
\end{equation}
Like its $4$-homogeneous counterpart $\psi$, the functional $\phi$ is $\Gamma_{\!\alphabold}$-invariant, so that, by Step 1, 
\begin{equation}\label{eq:PreTaylor}
	\phi(\fbold)=\phi(c\Gamma_{\!\alphabold} \fboldstar + \Gamma_{\!\alphabold} \fboldbot) = \phi(c\fboldstar + \fboldbot).
\end{equation}	
Now $\phi(c\fboldstar)=0$, and since $\Braket{\fboldstar|\fboldbot}_\PhaseSpace=0$, we can expand to see that
\begin{equation}\label{eq:phi_first_order}
	\left.\frac{d}{d\eps}\phi(c\fboldstar +\eps \fboldbot)\right|_{\eps=0} = -\frac{2 c}{\norm{S_t\fboldstar}_{L^4}^{2}}\iint_{\R^{1+3}}(S_t\fboldstar)^3S_t\fboldbot\, dtdx.
\end{equation}
Combining Theorem~\ref{thm:put} and Lemma~\ref{lem:Penrose_first_order} from the previous section, we see that the right-hand side is zero. Expanding to second order, using this fact again, we obtain 
\begin{equation}\label{eq:phi_second_order}
	\begin{split}
	\phi(c\fboldstar + \eps \fboldbot)&=\eps^2 \Big[ \StrichartzConstant^2\norm{\fboldbot}_{\PhaseSpace}^2 - \frac3{\norm{S_t\fboldstar}_{L^4}^2}\iint_{\R^{1+3}}(S_t\fboldstar)^2(S_t\fbold)^2\,dtdx \Big]\\ &\ \ \ \ +O(\eps^3\norm{\fboldbot}_\PhaseSpace^3).
	\end{split}
\end{equation}
Evaluating at $\eps=1$, using that $\norm{S_t\fboldstar}_{L^4(\R^{1+3})}=\StrichartzConstant\norm{\fboldstar}_{\PhaseSpace}$, and comparing with the expression of $Q$ given in \eqref{eq:Q_functional}, we obtain
\begin{equation}\label{eq:psi_taylor}
	\phi(c\fboldstar+ \fboldbot) = \frac{Q(\fboldbot) }{2\StrichartzConstant^2\norm{\fboldstar}_{\Hcaldot^\frac12}^2}+ O(\norm{\fboldbot}_{\Hcaldot^\frac12}^3),
\end{equation} 
The proposition then follows from Lemma~\ref{lem:main}, using that $\StrichartzConstant^2=(3/16\pi)^{1/2}$ and that $\norm{\fboldstar}_{\PhaseSpace(\R^3)}^2=\abs{\SSS^3}=2\pi^2$.
\end{proof}

The proof of Theorem \ref{thm:main} will be obtained by the combination of Proposition~\ref{prop:main} with the following property of optimizing sequences of the Strichartz inequality. We remark that, unlike the previous proposition, in the proof of the following lemma we use the result of Foschi that $\StrichartzConstant$ is the sharp constant in the Strichartz inequality.
\begin{lem}\label{lem:structure_lemma}
	Let $\fbold_n\in\Hcaldot^\frac12\setminus\Set{\obold}$ be a sequence such that 
	\begin{equation}\label{eq:maximizing_prop}
		\lim_{n\to \infty} \frac{\norm{S_t\fbold_n}_{L^4(\R^{1+3})}}{\norm{\fbold_n}_{\Hcaldot^\frac12}}=\StrichartzConstant.
	\end{equation}
	Then, up to passing to a subsequence,
	\begin{equation}\label{eq:distance_to_zero}
		\lim_{n\to\infty} \frac{ \dist(\fbold_n, \Mrom)}{\norm{\fbold_n}_{\Hcaldot^\frac12}}=0.
	\end{equation}
\end{lem}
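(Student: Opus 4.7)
\textbf{Proof plan for Lemma~\ref{lem:structure_lemma}.} My strategy is to extract subsequential profiles with the Ramos decomposition of Theorem~\ref{thm:profile_decomposition} and then use Foschi's sharp constant $\StrichartzConstant$ together with the two Pythagorean identities to force all but one profile to vanish. Since $\Mrom$ is closed under nonzero scalar multiplication and both sides of \eqref{eq:distance_to_zero} are $0$-homogeneous in $\fbold_n$, I may replace $\fbold_n$ by $\fbold_n / \norm{\fbold_n}_{\Hcaldot^\frac12}$ and assume $\norm{\fbold_n}_\PhaseSpace = 1$; then \eqref{eq:maximizing_prop} reads $\norm{S_t\fbold_n}_{L^4} \to \StrichartzConstant$. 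Applying Theorem~\ref{thm:profile_decomposition} yields (along a subsequence) profiles $\fbold^j \in \PhaseSpace$ and parameters $\alphabold_n^j$ so that \eqref{eq:profile_decomposition}, \eqref{eq:profile_decomposition_smallness_remainder_term}, \eqref{eq:Pythagorean_energy}, \eqref{eq:Pythagorean_strichartz} all hold.

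Write $a_j := \norm{\fbold^j}_\PhaseSpace^2$. From \eqref{eq:Pythagorean_energy} with the normalization $\norm{\fbold_n}_\PhaseSpace = 1$, I immediately get $\sum_{j=1}^J a_j \le 1$ for every $J$, and in particular $a_j \in [0, 1]$. On the Strichartz side, bounding each profile by the sharp inequality $\norm{S_t\fbold^j}_{L^4}^4 \le \StrichartzConstant^4 a_j^2$ and inserting into \eqref{eq:Pythagorean_strichartz}, I send $n \to \infty$ for fixed $J$ (absorbing the $o(1)$ term) and then $J \to \infty$, so that the Strichartz norm of the remainder goes to $0$ by \eqref{eq:profile_decomposition_smallness_remainder_term}; this delivers
\begin{equation*}
    \StrichartzConstant^4 = \lim_{n\to\infty} \norm{S_t \fbold_n}_{L^4}^4 \le \StrichartzConstant^4 \sum_{j=1}^\infty a_j^2.
\end{equation*}
Combining the two inequalities, $1 \le \sum_j a_j^2 \le (\sup_j a_j) \sum_j a_j \le \sum_j a_j \le 1$, every inequality is an equality, which forces exactly one $a_{j_0} = 1$ and all other $a_j = 0$. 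Renumbering so that $j_0 = 1$, the unique surviving profile $\fbold^1$ has unit $\PhaseSpace$-norm and saturates the Strichartz inequality, so $\fbold^1 \in \Mrom$ by \eqref{eq:Mrom_Manifold}. The remaining Pythagorean identity \eqref{eq:Pythagorean_energy}, with $J=1$, then gives $\norm{\rbold_n^1}_\PhaseSpace \to 0$.

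Finally, $\phibold_n := \Gamma_{\alphabold_n^1}\fbold^1$ lies in $\Mrom$ by invariance of $\Mrom$ under the symmetry action $\Gamma_{\alphabold}$, and \eqref{eq:profile_decomposition} reads $\fbold_n = \phibold_n + \rbold_n^1$, whence $\dist(\fbold_n, \Mrom) \le \norm{\rbold_n^1}_\PhaseSpace \to 0$, which is exactly \eqref{eq:distance_to_zero}. The main obstacle is the bookkeeping of the double limit in the central step: the energy-Pythagorean error is $o_n(1)$ for fixed $J$, while the smallness of the remainder in $\Lfour$ is obtained only after $J \to \infty$; once these are ordered correctly, the conclusion reduces to the elementary convex observation that $\sum a_j^2 \le \sum a_j$ on $[0,1]$, with equality only when a single $a_j$ equals $1$.
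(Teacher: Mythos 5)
Your proof is correct and follows essentially the same route as the paper: normalize, apply the Ramos profile decomposition, and play the two Pythagorean expansions against the sharp constant to force a single unit-norm profile, which must then lie in $\Mrom$. The only cosmetic difference is that you run the convexity argument over the full sum $\sum_j a_j^2 \ge 1 \ge \sum_j a_j$, whereas the paper takes $J=1$ and uses the equivalent observation that $a^2+b^2=1$ with $a^4+b^4\ge 1$ forces one of $a,b$ to vanish.
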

\begin{proof}
By homogeneity we may assume that $\norm{\fbold_n}_{\Hcaldot^\frac12}=1$. We apply the profile decomposition, Theorem \ref{thm:profile_decomposition}. This produces a sequence $\Set{\fbold^j : j\in\N}$ in $\Hcaldot^{1/2}$. We claim that $\fbold^j=0$ for all but one $j\in\N$. To prove this, we begin by showing that there is at least one $j\in\N$ such that $\fbold^j\ne 0$. Indeed, if that was not the case then from property \eqref{eq:Pythagorean_strichartz} one would infer the contradiction $\StrichartzConstant=0$. Thus we can assume that $\fbold^1\ne 0$. 

The Pythagorean expansion \eqref{eq:Pythagorean_energy} with $J=1$ reads
\begin{equation}\label{eq:oneprof_Pyt_energy}
	1=\norm{\fbold^1}_{\Hcaldot^\frac12}^2 +\lim_{n\to \infty} \norm{\rbold^1_n}_{\Hcaldot^\frac12}^2.
\end{equation}
On the other hand, applying the sharp Strichartz inequality to the $L^4(\R^{1+3})$ Pythagorean expansion \eqref{eq:Pythagorean_strichartz} we obtain
\begin{equation}\label{eq:strichartz_to_pythagoras}
	\begin{split}
		\StrichartzConstant^4=\lim_{n\to\infty}\norm{S_t\fbold_n}_{L^4(\R^{1+3})}^4&=\norm{S_t\fbold^1}_{L^4(\R^{1+3})}^4 +\lim_{n\to\infty} \norm{ S_t\rbold^1_n}_{L^4(\R^{1+3})}^4 \\
		&\le\StrichartzConstant^4\Tonde{\norm{\fbold^1}_{\Hcaldot^\frac12}^4 + \lim_{n\to\infty} \norm{\rbold^1_n}_{\Hcaldot^\frac12}^4}.
	\end{split}
\end{equation} 
Now if  $a, b\in \R$ are such that $a^2+b^2=1$ and $a^4+b^4\ge1$, then necessarily one of them must vanish. Since $\fbold^1\ne \obold$, then it must be that $\norm{\rbold^1_n}_{\Hcaldot^\frac12}\to0$. We have thus shown that
\begin{equation}\label{eq:fn_tends_fone}
	\begin{array}{cc}
		\fbold_n=\Gamma_{\!\alpha^1_n}\fbold^1 + \rbold^1_n, & \norm{\rbold^1_n}_{\Hcaldot^\frac12}\to 0.
	\end{array}
\end{equation}
This yields, using \eqref{eq:maximizing_prop}, that $\fbold^1\in\Mrom$. Therefore 
\begin{equation}\label{eq:dist_fbold_Mrom}
	\dist(\fbold_n, \Mrom)\le \norm{\rbold^1_n}_{\Hcaldot^\frac12}\to 0,
\end{equation}
and the proof is complete.
\end{proof}

Combining Proposition~\ref{prop:main} and Lemma~\ref{lem:structure_lemma} we prove the lower bound in Theorem \ref{thm:main}.
\begin{proof}[Proof of Theorem \ref{thm:main}]
Since $\obold\in\Mrom$, we have that
\begin{equation}\label{eq:dist_norm}
	\begin{array}{cc}
		\dist(\fbold, \Mrom)\le\norm{\fbold}_{\Hcaldot^\frac12}, & \forall \fbold\in\Hcaldot^\frac12.
	\end{array}
\end{equation}
Assume for a contradiction that the lower bound of Theorem \ref{thm:main} fails. This would mean that there exists a sequence $\fbold_n\in\Hcaldot^\frac12\setminus\Mrom$ such that 
\begin{equation}\label{eq:seq_contradict}
	\lim_{n\to\infty} \frac{\StrichartzConstant^2\norm{\fbold_n}_{\Hcaldot^\frac12}^2 - \norm{S_t\fbold_n}_{L^4(\R^{1+3})}^2 }{\dist(\fbold_n, \Mrom)^2} = 0.
\end{equation}
By homogeneity we can assume that $\norm{\fbold_n}_{\Hcaldot^\frac12}=1$, and so $\dist(\fbold_n, \Mrom)\le 1$. Then~\eqref{eq:seq_contradict} implies that $\StrichartzConstant^2\norm{\fbold_n}_{\Hcaldot^\frac12}^2- \norm{S_t\fbold_n}_{L^4(\R^{1+3})}^2\to 0$. By Lemma \ref{lem:structure_lemma} we obtain that $\dist(\fbold_n, \Mrom)\to 0$, and so that~\eqref{eq:seq_contradict} would contradict our local bound, Proposition~\ref{prop:main}. 
\end{proof}

\begin{rem}\label{rem:best_constant_lost}
	The multiplicative constant $\frac13\StrichartzConstant^2$ in Proposition \ref{prop:main} is the optimal one for the local bound. However, the argument by contradiction just presented does not give the optimal constant for the global bound. Estimating such optimal constants is in general a hard problem; see for example~\cite[§1.1.1,~§2.5]{Dol21} for a survey of recent progress in the context of Sobolev inequalities. 
	
	\end{rem}

\section*{Acknowledgements} I would like to express my utmost gratitude to my PhD directors, Thomas Duyckaerts and Keith Rogers. They brought these problems and methods to my attention and provided careful guidance and support, without which the writing of this paper would not have been possible. I am also grateful to the editorial board and to the anonymous referees, for the painstaking checking and the numerous suggestions which improved the article.

\printbibliography

\end{document}